\newtheorem{thm}{Theorem}[section]
\newtheorem{prop}[thm]{Proposition}
\newtheorem{cor}[thm]{Corollary}
\newtheorem{lem}[thm]{Lemma}
\theoremstyle{definition}
\newtheorem{rem}[thm]{Remark}
\numberwithin{equation}{section}
\newtheorem{fg*}{figure}
\newtheorem{thmc}{Conjecture }
\newcommand{\C}{{\mathbb C}}
\newcommand{\D}{{\mathbb D}}
\newcommand{\B}{{\mathbb B}}
\newcommand{\bn}{{\mathbb B}_n}
\newcommand{\cn}{\C^n}
\newcommand{\aut}{{\rm Aut}\,(\Omega)}
\begin{document}

\title[Evaluation functions and composition operators  ]
{Evaluation functions and composition operators on  Banach spaces of holomorphic functions}

\author{Guangfu Cao}
\address{Cao: School of Mathematics and Information Science,
Guangzhou University, Guangzhou 510006, China.}
\email{guangfucao@163.com}

\author{Li He*}
\address{He: School of Mathematics and Information Science,
Guangzhou University, Guangzhou 510006, China.}
\email{helichangsha1986@163.com}

\author[J. Li]{Ji Li}
\address{Li:  School of Mathematical and Physical Sciences, Macquarie University, NSW, 2109, Australia}
\email{ji.li@mq.edu.au}

\thanks{*Corresponding author, email: helichangsha1986@163.com}
\keywords{Banach space of holomorphic functions, evaluation function, composition operator, Fredholm operator, automorphism.}
\subjclass[2010]{47B33, 47A53}

\begin{abstract}
Let	$B(\Omega)$ be the Banach space  of holomorphic functions on a bounded connected domain  $\Omega$  in $\cn$, which  contains the ring of polynomials on $\Omega $.  In this paper  we first establish a criterion for $B(\Omega )$ to be reflexive
via evaluation functions on $B(\Omega )$, that is, $B(\Omega )$ is reflexive if and only if the evaluation functions  span the  dual spaces $(B(\Omega ))^{*} $.
Moreover, under suitable assumptions on $\Omega$ and $B(\Omega)$,
we establish a characterization of the composition operator $C_\varphi$ to be a Fredholm operator on $B(\Omega)$ via the property of
the holomorphic self-map $\varphi:\Omega\to\Omega$. Our new approach utilizes the symbols of composition operators to construct a linearly independent function sequence, which bypasses the use of boundary behavior of reproducing kernels as those may not be applicable in our general setting.

\end{abstract}

\maketitle
\section{Introduction}
\setcounter{equation}{0}
Let	$B(\Omega)$ be the Banach space  of holomorphic functions on a bounded connected domain  $\Omega$  in $\cn$, which  contains the ring of polynomials on $\Omega $. When   $  n>1$,    we may assume that $\Omega $ is simply connected since any holomorphic function on $\Omega $ can be analytically extended to the holes in $\Omega $ by Hartogs theorem.
Given a holomorphic self-map $\varphi:
\Omega\to\Omega$, the composition operator $C_\varphi: B(\Omega)\to B(\Omega)$
is defined by $$C_\varphi f=f\circ\varphi, \quad\forall f\in B(\Omega).$$
Moreover, let  $K_{w}$ denote the evaluation function at $w\in \Omega$, that is $$K_{w}(f)=f(w), \quad\forall f\in B(\Omega).$$

\subsection{Aims and background} The aim of  this paper is twofold: firstly, to build the criterion for reflexivity of the Banach space $B(\Omega)$ via the evaluation functions  on $B(\Omega)$; and secondly, to characterize the composition operator $C_\varphi$ to be a Fredholm operator on $B(\Omega)$. We now state the questions in details.

\newpage
\noindent\textbf{1. \it Reflexivity of $B(\Omega)$.}

Characterization of reflexivity of different types of Banach spaces is a fundamental problem in functional analysis. A very famous general criterion is the well-known Kakutani Theorem which utilizes the closed unit ball of the Banach space.

Turning to the specific Banach space $B(\Omega)$ that we focus in this paper, finding a specific criterion for reflexivity is certainly an important problem.

Our idea is to use evaluation functions on $B(\Omega)$, which plays the role of reproducing kernels on a Hilbert space. However, comparing to the reproducing kernels (on a Hilbert space), the property of evaluation functions on Banach spaces is far less known.
As we know,    an evaluation function can   be unbounded   even   in the Hilbert space of holomorphic functions. For instance, let $\mathbb{D}_{2}$ be the disc with radius  2 in the complex plane $\mathbb C$ and take $\{z^{n}\}$ as the orthogonal basis of the  space $H(\mathbb{D}_{2}).$ For $f, g\in H(\mathbb{D}_{2}) $, expand $f(z)=\sum_{n=0}^{\infty }a_{n}z^{n}, g(z)=\sum_{n=0}^{\infty }b_{n}z^{n}\in H(\mathbb{D}_{2})$ and then define the inner product of $f$ and $g$ as
\begin{align}\label{inner product}
\langle f, g\rangle=\sum_{n=0}^{\infty }a_{n}\overline{b_{n}}.
\end{align}
It is not difficult to verify that $H(\mathbb{D}_{2})$ is a Hilbert space. Take the point $z_{0}=\frac{3}{2},$ then $z_{0}\in \mathbb{D}_{2}$. Let $K_{z_{0}}$ be the evaluation function on $H(\mathbb{D}_{2})$ at $z_{0},$ that is,
$K_{z_{0}}(f)=f(z_{0})$ for any $f\in H(\mathbb{D}_{2}).$ We  see easily that $K_{z_{0}}$ is unbounded on $H(\mathbb{D}_{2}).$ In fact, if $f$ is taken as $f(z)=\sum_{n=0}^{\infty }\frac{1}{n}z^{n},$ then
$$
\|f\|=\sqrt{\sum_{n=0}^{\infty }\frac{1}{n^{2}}}<\infty .
$$
However, $K_{z_{0}}(f)=\sum_{n=0}^{\infty }\frac{1}{n}(\frac{3}{2})^{n}=\infty .$  Obviously, $H(\mathbb{D}_{2})$ is isomorphic to the weighted Hardy space $H^{2}(\beta )$ with weight $\beta_{n}=\frac{1}{2^{n}}.$

To  further understand  the  properties of evaluation functions on $B(\Omega)$,  we recall that in the Hilbert space of holomorphic functions, the reproducing kernel has a very important property  that  $\{K_ {z} | z \in U\}$ spans the full  space for any open subset or dense subset $U$ of the domain (see for example, \cite{CM}). The  similar conclusion  may not    hold  in the general Banach space of  holomorphic  functions.  Instead,  we consider the following question related to  the reflexivity of the specific Banach space
$B(\Omega)$. 

\smallskip
{\bf Question 1:} Whether we can characterize the reflexivity of $B(\Omega)$ via the evaluation functions on $B(\Omega)$?
\medskip

\noindent\textbf{2. \it Composition operator to be Fredholm on $B(\Omega)$.}

\smallskip

We first recall that a bounded linear operator $T$ on a Banach space is called a Fredholm operator if
$T$ has closed range, $\dim(\ker(T))<\infty$, and $\dim(\ker(T^*))<\infty$. It is
trivial that every invertible operator on a Banach space is Fredholm.

In   the Hilbert space $H(\Omega)$ of holomorphic functions on a bounded connected domain  $\Omega$  in $\cn$, it follows from the Riesz representation theorem for Hilbert spaces  that there exists a unique function $K_w\in H(\Omega)$ such that
$$
f(w)=\langle f, K_w\rangle,  \quad f\in  H(\Omega).
$$
The above function $K(z, w)=K_w(z)$ defined on $\Omega\times \Omega$, is  often called the reproducing kernel of $H(\Omega)$.
In \cite[Main Theorem]{CHZ2},    K. Zhu and  the first and second named authors of this paper  used   the theory of proper holomorphic maps to
show that
if
\begin{eqnarray}\label{e1.1}
	\|K_{w}\|=\sqrt{K(w,w)}\rightarrow \infty \hskip 4mm \mbox{as} \hskip 4mm w\rightarrow  \partial \Omega,
\end{eqnarray}
then the
following conditions are equivalent:
\begin{enumerate}
	\item[(i)] $C_\varphi$ is a Fredholm operator on $H(\Omega)$.
	\item[(ii)] $C_\varphi$ is an invertible operator on $H(\Omega)$.
	\item[(iii)] $\varphi\in\aut$.
\end{enumerate}

 In \cite{MZ}, J.S. Manhas and R. Zhao  pointed out  that      \cite[Main Theorem]{CHZ2}  requires an additional  assumption that  the sequence $\{f_{k}\}$ should converges to 0 weakly in $H,$ where
$$
f_{k}(w)=K(w,z_{k})/\sqrt{K(z_{k},z_{k})}=K(w,z_{k})/\|K_{z_{k}}\|.
$$
In fact,  Let  $\Omega =\{z\in \mathbb{C}:0<|z|<1\}$ and let $H$  denote the Hilbert space of holomorphic functions on $\Omega $  with orthonormal basis $e_n =z^n, n = -1,0,1, 2, 3,\ldots  .$ For any $f(z)=\sum_{n=-1}^{\infty }a_{n}z^{n}, g(z)=\sum_{n=-1}^{\infty }b_{n}z^{n}\in H,$ we define the inner product of $f$ and $g$ as in \eqref{inner product}.
Let
$\{z_{k}\}$ be a sequence of points in $\Omega $ such that $|z_{k}|\rightarrow 0$ as $k\rightarrow \infty .$
Then
$
f_{k}(w)=K(w,z_{k})/\sqrt{K(z_{k},z_{k})}=K(w,z_{k})/\|K_{z_{k}}\|
$
does not converges to 0 weakly in $H$ (see  \cite{MZ} for details). 

We point out that there exist Hilbert spaces of analytic functions whose
reproducing kernel does not satisfy the boundary behaviour \eqref{e1.1}. 
For example, the Hardy--Sobolev space $H_{\beta }^{2}(\D)$ on the unit disc $\D$ in $\mathbb C$ with $\beta >1/2$
has a bounded reproducing kernel $K(z,w)$. 
And hence, \cite[Main Theorem]{CHZ2} fails to cover this case.
In \cite{H},   the second named author of this paper  gave a new proof to show  that for  $n\geq 1, \beta >n/2$ and for $\varphi:{\B}_{n}\to{\B}_{n}$ to be a holomorphic self-map of ${\B}_{n}, $  $C_{\varphi}$ is Fredholm on $H_{\beta}^{2}$
if and only if $\varphi \in Aut(\mathbb{B}_{n})$, the automorphism group of ${\B}_{n}$. From this, we see that the boundary behavior \eqref{e1.1} is not a necessary condition for the Fredholmness of composition operator. 
Hence, the boundary behavior of reproducing kernel is not indeed necessary in more general situations.

The result in   \cite[Main Theorem]{CHZ2}
has several    applications    in the literature. For example, Fredholm composition operators on the Hardy space of the unit disk were characterized in \cite{B1, CS,CTW};  on the Bergman space of the unit disk were characterized in \cite{JS,B1}; on the Dirichlet space of the unit disk were characterized in \cite{C}. For Fredholm composition operators on other spaces of holomorphic functions on domains in $\mathbb{C} $ and $\mathbb{C}^n$, we refer to \cite{B1, C1, CTW,  CM, GGL, HO, KA, MB, MZ,Zo} and the references therein.

Thus, along the line of  \cite[Main Theorem]{CHZ2} on Hilbert space, the natural question that we consider is the following.

\smallskip
{\bf Question 2:} Whether we can establish a characterization of $C_\varphi$ to be a Fredholm operator on $B(\Omega)$ via 
the holomorphic self-map $\varphi:\Omega\to\Omega$?
\medskip

In this paper, we give confirmative answers to these two questions above.

\subsection{Statement of main results}
Recall again that $B(\Omega)$ is the Banach space  of holomorphic functions on a bounded connected domain  $\Omega$  in $\cn$, which  contains the ring of polynomials on $\Omega $.
We list the following  assumptions about
$\Omega$ and $B(\Omega)$ which will be used in stating our main results:

\begin{enumerate}
	\item[{\bf(a)}] For each $w\in\Omega$ and for every $f\in B(\Omega)$, the evaluation map $f\mapsto f(w)$ is a bounded
	linear functional on $B(\Omega)$. 
	
	\item[{\bf(b)}] Let $\aut$ denote the automorphisim group of $\Omega$. That is,
	$\varphi\in\aut$ if and only if $\varphi:\Omega\to\Omega$ is holomorphic,
	one-to-one, and onto. We assume that every $\varphi\in\aut$ induces a bounded
	composition operator on $B(\Omega)$. In this case, the operator $C_\varphi$ is invertible
	with $C_\varphi^{-1}=C_\psi$, where $\psi=\varphi^{-1}$.
	
	\item[{\bf(c)}] The points of $\Omega$ are separated by $B(\Omega)$ in the sense that for any finite
	sequence $\{a_k\}$ of distinct points in $\Omega$, the evaluation functions $\{K_{a_k}\}$
	are linearly independent.
\end{enumerate}

We now state our first main result as follows,
which answers {\bf Question 1} listed in Section 1.1.

\begin{thm} \label{th1.1}
Let $\Omega$ and $B(\Omega)$ satisfy Assumption {\bf(a)}.
Then $B(\Omega )$ is  reflexive  if and only if  for any open subset or dense subset $U$ of $\Omega , $
	$$
	\bigvee_{z\in U}\{K_{z}\}=(B(\Omega ))^{*},
	$$
	where $\bigvee\limits_{z\in U}\{K_{z}\}$ denotes the space spanned by $\{K_{z}\}_{z\in U}$.
\end{thm}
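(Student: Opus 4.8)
The plan is to handle the two implications separately, the common starting point being that the family $\{K_z:z\in U\}$ separates points of $B(\Omega)$: if $f\in B(\Omega)$ vanishes on an open or dense set $U$, then $f\equiv0$ on the connected domain $\Omega$ by the identity theorem. In dual language this says $\{K_z:z\in U\}$ is total, so its weak-$*$ closed span is automatically all of $(B(\Omega))^*$; the content of the theorem is to compare the norm-closed span $\bigvee_{z\in U}\{K_z\}$ with this weak-$*$ closed span, and reflexivity is exactly the condition that makes the two coincide.

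For necessity, assume $B(\Omega)$ is reflexive. I would phrase the argument through the bidual: by the Hahn--Banach theorem it is enough to show that no nonzero $\Phi\in(B(\Omega))^{**}$ annihilates every $K_z$ with $z\in U$. Since $B(\Omega)$ is reflexive, such a $\Phi$ equals $J(f)$ for some $f\in B(\Omega)$, where $J$ is the canonical embedding, so that $f(z)=\Phi(K_z)=0$ for all $z\in U$; the separation property forces $f\equiv0$ and hence $\Phi=0$. Therefore the annihilator is trivial and $\bigvee_{z\in U}\{K_z\}=(B(\Omega))^*$. Equivalently, in a reflexive space the weak-$*$ and weak topologies on the dual agree, and by Mazur's theorem the norm closure of the convex set $\mathrm{span}\{K_z:z\in U\}$ equals its weak-$*$ closure, which is the whole dual by totality.

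For sufficiency, assume $\bigvee_{z\in U}\{K_z\}=(B(\Omega))^*$; the goal is to deduce reflexivity, and the natural route is to show that the closed unit ball $B_{B(\Omega)}$ is weakly compact and apply Kakutani's theorem. I would first extract quantitative consequences of Assumption {\bf(a)}: for each compact $E\subset\Omega$ the set $\{K_z:z\in E\}$ is pointwise bounded on $B(\Omega)$, so the uniform boundedness principle gives $\sup_{z\in E}\|K_z\|<\infty$; combined with the scalar holomorphy of $z\mapsto K_z(f)=f(z)$ and a vector-valued holomorphy theorem, this makes $z\mapsto K_z$ a norm-holomorphic $(B(\Omega))^*$-valued map. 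Two things follow: bounded subsets of $B(\Omega)$ are locally uniformly bounded, hence normal families by Montel's theorem; and for every $\Phi\in(B(\Omega))^{**}$ the function $g_\Phi(z):=\Phi(K_z)$ is holomorphic on $\Omega$. Since the hypothesis makes $\mathrm{span}\{K_z\}$ norm dense in the dual, on the bounded set $B_{B(\Omega)}$ the weak topology coincides with the topology of pointwise (equivalently compact-open) convergence.

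The main obstacle, and where I expect the real work to lie, is converting this coincidence of topologies into genuine weak compactness of $B_{B(\Omega)}$. Montel's theorem only yields relative compactness of $B_{B(\Omega)}$ in the compact-open topology, and a bounded sequence may converge locally uniformly to a holomorphic limit that, a priori, lives only in the ``realized bidual'' $\{g_\Phi:\Phi\in(B(\Omega))^{**}\}$ rather than in $B(\Omega)$ itself. The crux is therefore to show that the norm-density hypothesis prevents such an escape, i.e. that each limit $g_\Phi$ actually belongs to $B(\Omega)$ and satisfies $J(g_\Phi)=\Phi$; granting this, $J$ maps $B_{B(\Omega)}$ onto $B_{(B(\Omega))^{**}}$ and reflexivity follows. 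I would try to pin the limit down inside $B(\Omega)$ by feeding the norm-density of $\mathrm{span}\{K_z\}$ into a Hahn--Banach argument together with Goldstine's theorem; this step, controlling the bidual through the holomorphic structure of the space, is the delicate point on which the whole sufficiency direction rests, and it is plausible that it requires leaning on more than Assumption {\bf(a)} alone.
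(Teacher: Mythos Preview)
Your necessity direction is correct and coincides with the paper's argument (their Proposition~2.4): Hahn--Banach on the bidual, reflexivity to pull back to some $f\in B(\Omega)$, then the identity theorem on the connected domain. The Mazur reformulation you add is a pleasant alternative phrasing.

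For sufficiency you have the right skeleton---Kakutani's theorem, uniform boundedness of $\{K_z\}$ on compacta, Montel/normal families, the holomorphic function $g_\Phi(z)=\Phi(K_z)$---and you correctly isolate the crux: given a sequence $(f_{k_j})$ in the unit ball converging locally uniformly to some holomorphic $f$, one must show $f\in B(\Omega)$ and that $f$ represents the weak-$*$ cluster point $G\in(B(\Omega))^{**}$. But you stop at this point, saying only that you ``would try'' a Hahn--Banach/Goldstine argument and speculating that Assumption~\textbf{(a)} alone may not suffice. That is the genuine gap: the hard step is left undone. The paper does close it, and with \textbf{(a)} only. The mechanism is this. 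First, norm-density of $\mathrm{span}\{K_z\}$ upgrades the pointwise convergence $K_z(f_{k_j})\to f(z)$ to $F(f_{k_j})\to G(F)$ for \emph{every} $F\in(B(\Omega))^*$ (approximate $F$ in norm by finite combinations of $K_z$'s; the uniform bound $\|f_{k_j}\|\le1$ controls the error). This lets one \emph{define} $F(f_G):=G(F)$ consistently, with $f_G(z)=G(K_z)=f(z)$ and $\sup_{\|F\|\le1}|F(f_G)|\le1$. Second, to force $f_G\in B(\Omega)$, the paper argues by contradiction: adjoin $f_G$ to $B(\Omega)$ to form the one-dimensional extension $B=B(\Omega)\oplus\mathbb{C}f_G$ with a product-type norm. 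Every $F\in(B(\Omega))^*$ extends boundedly to $B$ via $F(f_G)=G(F)$, and every bounded functional on $B$ restricts to one on $B(\Omega)$, so $B^*$ is identified with $(B(\Omega))^*$. If $f_G\notin B(\Omega)$, Hahn--Banach yields $F_0\in B^*$ with $F_0|_{B(\Omega)}=0$ and $F_0(f_G)\neq0$; but then $F_0\in(B(\Omega))^*$ and $F_0(f_G)=\lim_j F_0(f_{k_j})=0$, a contradiction. Hence $f_G\in B(\Omega)$, $G=J(f_G)$, the unit ball is weakly (sequentially) compact, and Kakutani finishes. So your worry that extra hypotheses are needed is unfounded; what you are missing is precisely this extension-space trick that converts norm-density of the $K_z$'s into the statement that $f_G$ cannot be separated from $B(\Omega)$ by any functional.
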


The proof of Theorem~\ref{th1.1} will be given in Section 2.

We note that the criterion established in Theorem~\ref{th1.1} can be applied to various settings. For example, taking $\Omega$ to be the unit ball $\mathbb{B}_{n} $ in $\mathbb C^n$, we see that our criterion applies to the 
  Hardy spaces,  Bergman spaces, Dirichlet spaces, Hardy--Sobolev spaces and   Bloch spaces on $\mathbb{B}_{n}$.

\smallskip

\smallskip

We now state our second main result, Theorem~\ref{th1.2} below,  which answers  {\bf Question 2} listed in Section 1.1.

\begin{thm} \label{th1.2}
	Let $\Omega$ and $B(\Omega)$ satisfy Assumptions {\bf(a)}, {\bf(b)} and {\bf(c)}.
	Moreover, we assume that $M_{z_{i}}\in \mathcal{M}(B(\Omega ))$ ($i=1,2,\cdots ,n$), the multiplier algebra on $B(\Omega )$ ,   and that  for any domain  $\Omega_{1} $ with $\Omega \subsetneqq \Omega_{1},$ there is at least an $h\in B(\Omega )$ such that $h$ can not be analytically extended to $\Omega_{1}.$ 
Let $\varphi:\Omega\to\Omega$ be a holomorphic self-map. Then the
following are equivalent:
	\begin{enumerate}
		\item[(i)] $C_\varphi$ is a Fredholm operator on $B(\Omega)$.
		\item[(ii)] $C_\varphi$ is an invertible operator on $B(\Omega)$.
		\item[(iii)] $\varphi\in\aut$.
	\end{enumerate}
\end{thm}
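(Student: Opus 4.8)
The three implications split cleanly. The implication (iii)$\Rightarrow$(ii) is immediate from Assumption \textbf{(b)}, and (ii)$\Rightarrow$(i) holds because every invertible operator is Fredholm. The whole content is therefore (i)$\Rightarrow$(iii), and the plan is to read off the geometry of $\varphi$ from the Fredholm data of $C_\varphi$ through the adjoint action on evaluation functions. A direct computation gives $C_\varphi^{*}K_w=K_{\varphi(w)}$ for every $w\in\Omega$, so $C_\varphi^{*}$ transports evaluation functions along $\varphi$. I would first dispose of degeneracy: if the Jacobian of $\varphi$ vanished identically, then $\varphi(\Omega)$ would lie in a proper analytic subvariety, the functions vanishing on it would form an infinite--dimensional subspace of $\ker C_\varphi$, and $C_\varphi$ could not be Fredholm; hence $\varphi$ has generic rank $n$ and, in particular, $\ker C_\varphi=\{0\}$.

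The key new step is injectivity of $\varphi$, where I would use Assumption \textbf{(c)} to manufacture the linearly independent family promised in the abstract directly from the symbol. If $\varphi$ were not injective there would be distinct $a,b\in\Omega$ with $\varphi(a)=\varphi(b)$; since $\varphi$ is an open map near its regular points, an entire nonempty open set of values would have at least two preimages, yielding infinitely many pairs $(a_j,b_j)$ of mutually distinct points with $\varphi(a_j)=\varphi(b_j)$. Then $C_\varphi^{*}(K_{a_j}-K_{b_j})=K_{\varphi(a_j)}-K_{\varphi(b_j)}=0$, while \textbf{(c)} forces $\{K_{a_j}-K_{b_j}\}$ to be linearly independent. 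This exhibits an infinite--dimensional subspace of $\ker C_\varphi^{*}$, contradicting Fredholmness; hence $\varphi$ is injective, and in particular no nontrivial finite combination of evaluation functions annihilates the range of $C_\varphi$.

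It remains to upgrade this to surjectivity, equivalently to prove that $\varphi$ is proper, and this is exactly the step at which \cite{CHZ2} invoked the boundary blow--up \eqref{e1.1}; replacing it is the main obstacle. If $\varphi$ is not proper one obtains a sequence $w_k\to\partial\Omega$ with $u_k:=\varphi(w_k)\to q\in\Omega$; a Banach--Steinhaus argument built on Assumption \textbf{(a)} makes $u\mapsto K_u$ norm--continuous on $\Omega$, so $C_\varphi^{*}K_{w_k}=K_{u_k}\to K_q$ in norm even though the $w_k$ escape every compact subset of $\Omega$. I would turn this mismatch into a singular sequence for $C_\varphi^{*}$: testing the normalized functionals against the coordinate multipliers $M_{z_i}$ (available by hypothesis) shows they cannot possess a nonzero norm--limit, so $C_\varphi^{*}$ is not bounded below on any finite--codimensional subspace and $C_\varphi$ is not Fredholm. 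The delicate case is when the $K_{w_k}$ stay norm--bounded; there the limit functional would be a bounded point evaluation at a point of $\partial\Omega$, and I would invoke the non--extension hypothesis---that no single function of $B(\Omega)$ continues past $\Omega$---to rule this out, since such a functional would force every element of $B(\Omega)$ to extend analytically across the boundary. With properness established, $\varphi$ is a nondegenerate proper injective holomorphic self--map, hence a biholomorphism of $\Omega$; equivalently, setting $g_i:=C_\varphi^{-1}z_i$ gives $g\circ\varphi=\mathrm{id}_\Omega$, and as $\varphi\circ g$ is then a holomorphic idempotent equal to the identity on the open set $\varphi(\Omega)$ it is the identity everywhere, so $g=\varphi^{-1}$ and $\varphi\in\aut$. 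I expect the norm--bounded subcase of the properness step, where the multiplier structure and the non--extension assumption must be combined, to be the crux of the whole argument.
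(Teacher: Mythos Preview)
Your treatment of (iii)$\Rightarrow$(ii)$\Rightarrow$(i) and of injectivity in (i)$\Rightarrow$(iii) is correct and essentially matches the paper's Lemma~4.1. The gap is in the surjectivity (properness) step, and it lies exactly in the subcase you yourself flag as the crux.

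Your plan in the bounded-kernel case is to argue that a bounded point evaluation at $\zeta\in\partial\Omega$ ``would force every element of $B(\Omega)$ to extend analytically across the boundary'' and then to invoke the non-extension hypothesis. This does not work. The hypothesis only asserts that \emph{some} $h\in B(\Omega)$ fails to extend to any strictly larger domain $\Omega_1$; it does not prohibit bounded evaluations at a single boundary point, or even on all of $\partial\Omega$. Indeed the paper's motivating example is $H^2_\beta(\mathbb B_n)$ with $\beta>n/2$, where the reproducing kernel is bounded on $\overline{\mathbb B_n}$ and every function is continuous up to the boundary, yet the non-extension hypothesis still holds. In precisely the spaces the theorem is designed to cover, the $K_{w_k}$ may converge in norm to a nonzero boundary-evaluation functional, and your singular-sequence argument produces nothing. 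The coordinate-multiplier test you propose only shows such a limit is a joint eigenvector of the $M_{z_i}^*$ at a boundary point, which is entirely consistent with Fredholmness.

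The paper's route to surjectivity is quite different and never passes through properness or Weyl sequences. It exploits the \emph{range} side of Fredholmness: since $B(\Omega)=R(C_\varphi)+ M$ with $\dim M=m<\infty$, one writes $z_i^{k}=C_\varphi f_{i,k}+\sum_{j=1}^m \alpha_{i,j}^k h_j$ and composes with $\varphi^{-1}$ on $\varphi(\Omega)$. This traps every $(\varphi_i^{-1})^k$ in the span of $B(\Omega)|_{\varphi(\Omega)}$ together with the $m$ fixed functions $h_j\circ\varphi^{-1}$; a dimension count forces $\varphi^{-1}$ to extend analytically to all of $\Omega$. A second iteration extends $\varphi$ to $\Omega_1=\widetilde{\varphi^{-1}}(\Omega)$, and finally the non-extension function $h$ is injected via $z_i^k h$ (this is where $M_{z_i}\in\mathcal M(B(\Omega))$ is used) to reach a contradiction unless $\varphi(\Omega)=\Omega$. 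The multiplier hypothesis is thus used to manufacture infinite linearly independent families on the range side, not to test eigenvalues on the dual side.
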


The proof of Theorem~\ref{th1.2} will be given in Section 4.

We would like to   mention that  the research on Fredholm composition operators dated back to  Cima, Thomson, Wogen and others in the 1970s in which   reproducing kernels played an essential role in the proof  on various holomorphic function spaces (see \cite{B1}, \cite{CF}, \cite{CHZ2}, \cite{CTW}, \cite{KA}, \cite{MB}). To prove our Theorem~\ref{th1.2}, we avoid the boundary behaviour of reproducing kernel as it may not be available, and use the symbols of composition operators to construct a linearly independent function sequence. 

Our Theorem~\ref{th1.2}  has many applications since
$B(\Omega)$ contains  various  Banach spaces of analytic functions including the important examples such as Hardy spaces, Bergman spaces, Dirichlet spaces, Bloch spaces and the Hardy--Sobolev spaces. 

We also point out that our Assumption {\bf (b)}, especially the condition that ``every $\varphi\in\aut$ induces a bounded
	composition operator on $B(\Omega)$'', is reasonable and necessary, since 
 there are   some Banach spaces which are not  invariant under automorphisms. For example, the weighted Hardy space $H^{2}(\beta) $ with $\beta(j) = 2^j,$ which is not invariant under automorphisms of the unit disk (see \cite{ CM}). Moreover, regarding the boundedness of composition operator, we would like to mention a recently result of B.Z. Hou and C.L. Jiang \cite{HJ}, where they gave a sufficient condition for the boundedness of composition operators with automorphism symbols on weighted Hardy spaces. They introduced the weighted Hardy space $H_{\beta }^2$ which is of polynomial growth, and proved that the composition operator with automorphism symbol is bounded on such spaces. In addition, they also illustrated that on a class of weighted Hardy space which is of intermediate growth, there exists an unbounded composition operator whose symbol is automorphism..

This paper is organized as follows. In Section 2, we provide the proof of Theorem~\ref{th1.1}. In Section 3 we give a criterion of a bounded linear operator to be a composition operator via evaluation function.   In Section 4, we provide the proof of Theorem~\ref{th1.2}. In Section 5, we give a characterization of a weighted composition operator to be a Fredholm operator on $B(\Omega)$.  In the last section we give some further remarks and the closely related conjectures.

\medskip

\section{ Properties of evaluation functions on $B(\Omega )$}
\setcounter{equation}{0}

In this section, we will discuss some  properties of evaluation functions. It can be proved that the closed unit ball of the Banach space of analytic functions is a normal family by the inner closed uniform boundedness of  the evaluation functions. This conclusion is somewhat unexpected, because it varies from the Hilbert spaces,  in general, the closed unit ball of an infinite dimensional Banach space is not weakly compact. In fact, Kakutani's Theorem, also called Eberlein--Shmuleyan's theorem, showed that the necessary and sufficient condition for the closed unit ball of a Banach space to be weakly compact is that the space is reflexive (see \cite{C0}).

We begin by recalling  a basic result in  \cite[p.5]{Ru}, followed by several auxiliary lemmas.
\begin{lem}\label{le2.1}
	If $\Lambda $ is a uniformly bounded family of holomorphic functions in $\Omega $, then $\Lambda $ is equicontinuous on every compact subset of $\Omega .$ In other words, $\Lambda$ is a normal family.
\end{lem}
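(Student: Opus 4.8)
The plan is to derive equicontinuity from the Cauchy estimates and then invoke the Arzel\`a--Ascoli theorem to conclude normality. First I would fix an arbitrary compact set $K\subset\Omega$ and choose $r>0$ small enough that the closed $r$-neighborhood $K_r=\{z\in\cn:\dist(z,K)\le r\}$ is again a compact subset of $\Omega$; this is possible because $K$ is compact and $\Omega$ is open, so that $\dist(K,\partial\Omega)>0$. Let $M$ denote the uniform bound, so that $|f(\zeta)|\le M$ for every $f\in\Lambda$ and every $\zeta\in K_r$.

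The heart of the argument is a uniform first-order derivative estimate. For each fixed $z\in K$, the closed polydisc of radius $r$ in each coordinate centered at $z$ lies inside $K_r\subset\Omega$, so the several-variable Cauchy integral formula represents $f$ and its partial derivatives on that polydisc in terms of the boundary values of $f$ over the distinguished boundary. Applying the standard Cauchy estimate gives a bound of the form $|\partial_j f(z)|\le C M/r$ for $j=1,\dots,n$, with a constant $C$ depending only on $n$, uniformly over $z\in K$ and $f\in\Lambda$. Consequently the family is uniformly Lipschitz on $K$: joining any two nearby points $z,w\in K$ by a segment contained in $K_r$ and integrating the gradient along it yields $|f(z)-f(w)|\le C'\,|z-w|$ for all $f\in\Lambda$, which is precisely equicontinuity on $K$.

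With equicontinuity established and uniform boundedness given by hypothesis, the Arzel\`a--Ascoli theorem applies on each compact $K$: every sequence drawn from $\Lambda$ admits a subsequence converging uniformly on $K$. Exhausting $\Omega$ by a countable increasing sequence of compact sets and extracting a diagonal subsequence then produces a subsequence converging uniformly on every compact subset of $\Omega$, whose limit is again holomorphic by the standard locally-uniform-limit theorem. Hence $\Lambda$ is a normal family in the asserted sense.

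I do not anticipate a genuine obstacle, as this is the classical Montel-type theorem recorded in \cite{Ru}. The only point requiring care is the several-variable bookkeeping: one must invoke the Cauchy integral formula over the distinguished boundary of a polydisc and verify that the radius $r$, and hence the derivative bound, can be chosen uniformly across all of $K$, which the compactness of $K$ guarantees.
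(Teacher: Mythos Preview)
Your argument is correct and is the standard Montel-type proof via Cauchy estimates and Arzel\`a--Ascoli. Note, however, that the paper does not supply its own proof of this lemma: it is simply quoted as a basic result from \cite[p.~5]{Ru}, so there is no ``paper's approach'' to compare against. Your write-up is exactly the classical argument one would find behind that citation; the only minor point worth tightening is that the segment from $z$ to $w$ need not lie in $K_r$ for arbitrary $z,w\in K$, but since equicontinuity is a local condition it suffices to take $|z-w|<r$, in which case the segment does lie in $K_r$ and your Lipschitz estimate goes through.
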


\begin{lem}\label{le2.2}
	Let	$B(\Omega)$ be the Banach space  of holomorphic functions on a   domain  $\Omega$.  For any $z\in \Omega ,$ the evaluation function $K_{z}$ is bounded on $B(\Omega ).$ Then for any compact subset $K$ in $\Omega ,$
	$$
	\sup_{z\in K}\|K_{z}\|<\infty .
	$$
\end{lem}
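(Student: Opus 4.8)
The plan is to deduce the uniform bound from the Uniform Boundedness Principle (Banach--Steinhaus theorem), whose applicability is guaranteed by the completeness of $B(\Omega)$. The family of functionals in question is $\{K_z : z\in K\}$, each of which is a bounded linear functional on $B(\Omega)$ by hypothesis; it then remains only to verify that this family is \emph{pointwise} bounded and to invoke Banach--Steinhaus to upgrade pointwise boundedness to uniform boundedness of the operator norms.

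First I would fix an arbitrary compact subset $K\subset\Omega$ together with an arbitrary vector $f\in B(\Omega)$. Since $f$ is holomorphic on $\Omega$, it is in particular continuous there, so its restriction to the compact set $K$ attains a finite maximum; that is,
$$\sup_{z\in K}|K_z(f)|=\sup_{z\in K}|f(z)|=:M_f<\infty.$$
Thus for each fixed $f$ the set of values $\{K_z(f):z\in K\}$ is a bounded subset of $\C$, which is exactly the pointwise boundedness required to run Banach--Steinhaus on the family $\{K_z\}_{z\in K}$.

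Having established pointwise boundedness, I would apply the Uniform Boundedness Principle to the family $\{K_z\}_{z\in K}$ of bounded linear functionals on the Banach space $B(\Omega)$, which yields precisely
$$\sup_{z\in K}\|K_z\|<\infty,$$
as claimed. I do not anticipate a genuine obstacle here; the only points requiring care are that the completeness of $B(\Omega)$ is what licenses the use of Banach--Steinhaus, and that it is merely the continuity of each $f$ on the compact set $K$ (rather than any deeper analytic property) that drives the pointwise bound. As an alternative route one could instead argue through Lemma~\ref{le2.1}, observing that the image of the closed unit ball of $B(\Omega)$ under the evaluation maps forms a normal, hence locally uniformly bounded, family on $K$; but the Banach--Steinhaus argument is the most direct.
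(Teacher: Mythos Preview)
Your proof is correct and follows essentially the same approach as the paper: establish pointwise boundedness of $\{K_z\}_{z\in K}$ via continuity of each $f$ on the compact set $K$, then invoke the Uniform Boundedness Principle (which the paper calls the ``uniformly bounded principle''). The paper's proof is slightly terser but the argument is identical.
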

\begin{proof}
	For any $f\in B(\Omega ),$ it is clear that
	$$
	\sup_{z\in K}|K_{z}(f)|=\sup_{z\in K}|f(z)|<\infty
	$$
	since $f$ is holomorphic on $\Omega .$ By the uniformly bounded principle, we see that
	$$
	\sup_{z\in K}\|K_{z}\|<\infty .
	$$
	The proof is complete.
\end{proof}

\begin{lem}\label{le2.3}	
	Let	$B(\Omega)$ be the Banach space  of holomorphic functions on a   domain  $\Omega$.  For any $z\in \Omega ,$ the evaluation function $K_{z}$ is bounded on $B(\Omega ).$   Then for arbitrary sequence $\{f_{k}\}$ in $(B(\Omega))_{1},$ the unit ball in $B(\Omega)$, we obtain that  $ \{f_{k}\}$ is a normal family.
\end{lem}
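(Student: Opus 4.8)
The plan is to deduce normality directly from Montel's theorem (Lemma~\ref{le2.1}), once I verify that the sequence $\{f_k\}$ is uniformly bounded on every compact subset of $\Omega$. First I would fix an arbitrary compact set $K\subset\Omega$ and set $M_K:=\sup_{z\in K}\|K_z\|$, which is finite by Lemma~\ref{le2.2}. For any $f_k\in(B(\Omega))_1$ and any $z\in K$, the boundedness of the evaluation functional gives the pointwise estimate $|f_k(z)|=|K_z(f_k)|\le\|K_z\|\,\|f_k\|\le M_K$, so that $\sup_k\sup_{z\in K}|f_k(z)|\le M_K<\infty$. This is the essential step, and it is immediate: the only inputs are the uniform bound on $\|K_z\|$ over $K$ from Lemma~\ref{le2.2} and the defining constraint $\|f_k\|\le1$ of the unit ball.

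Since $K$ was arbitrary, the family $\{f_k\}$ is locally uniformly bounded on $\Omega$. I would then invoke Lemma~\ref{le2.1} to obtain equicontinuity of $\{f_k\}$ on every compact subset of $\Omega$. Combining this equicontinuity with the pointwise boundedness just established, the Arzel\`a--Ascoli theorem applied along a compact exhaustion $K_1\subset K_2\subset\cdots$ of $\Omega$ (together with the usual diagonal argument) yields, from any subsequence, a further subsequence converging uniformly on each $K_j$, hence uniformly on compact subsets of $\Omega$. This is exactly the assertion that $\{f_k\}$ is a normal family.

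The only delicate point is the mismatch between the global uniform boundedness literally stated in the hypothesis of Lemma~\ref{le2.1} and the merely local uniform boundedness that the unit ball supplies. I would resolve this by applying Lemma~\ref{le2.1} compact-set-by-compact-set: for each $K$ one works on a slightly enlarged compact neighborhood $K'\supset K$ contained in $\Omega$, on which the restricted family is genuinely uniformly bounded, and then reads off equicontinuity on $K$. Beyond this bit of bookkeeping the argument is entirely routine, so I expect no substantive obstacle; the content of the lemma is really that boundedness of the evaluation functionals forces the inner-closed uniform boundedness that drives Montel's theorem.
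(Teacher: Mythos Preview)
Your proposal is correct and follows essentially the same approach as the paper: both arguments use Lemma~\ref{le2.2} to bound $|f_k(z)|\le\|K_z\|\,\|f_k\|$ uniformly on compacts, then address the local-versus-global mismatch in the hypothesis of Lemma~\ref{le2.1} by passing to an enlarged compact (the paper sandwiches $K\subset\widetilde\Omega\subset\widetilde K$ with $\widetilde\Omega$ open), and finally read off equicontinuity on $K$. Your explicit mention of Arzel\`a--Ascoli and the diagonal subsequence is simply spelling out what the paper leaves implicit in the phrase ``normal family.''
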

\begin{proof}
	Let $K$ be any compact subset of $\Omega .$ Choose another compact subset $\widetilde K\subset \Omega $ and an open subset $\widetilde \Omega\subset \Omega $ which satisfies
	$$
	K\subset \widetilde \Omega\subset \widetilde K,
	$$
	then
	\begin{eqnarray*}
		\sup_{k}\sup_{z\in \widetilde K}|f_{k}(z)|&=&\sup_{z\in \widetilde K}|K_{z}(f_{k})|\\
		&\leq &\sup_{k}\|f_{k}\|\cdot \sup_{z\in \widetilde K}\|K_{z}\| \\
		&\leq &\sup_{z\in \widetilde K}\|K_{z}\|<\infty
	\end{eqnarray*}
	by Lemma~\ref{le2.2}. Thus,
	$$
	\sup_{k}\sup_{z\in \widetilde \Omega}|f_{k}(z)|<\infty .
	$$
	This implies that $\{f_{k}\}$ is uniformly bounded on $ \widetilde \Omega,$ and also equicontinuous on every compact subset of $\widetilde \Omega $ by Lemma~\ref{le2.1}.  In particular $\{f_{k}\}$ is equicontinuous on $K.$ This completes the proof.
\end{proof}
It is well-known that for $1<p<\infty ,$ a sequence $\{f_{k}\}$ in Bergman space $A^{p}(\mathbb{D})$ convergences weakly to zero if and only if $\{\|f_{k}\|\}_{k}$ is bounded and $f_{k}(z)\rightarrow 0$ uniformly on compact subsets of $\mathbb{D}$ as $k\rightarrow \infty $ (see \cite{HKZ} Exercise 1.6.1). Since $A^{p}(\mathbb{D})$ $(1<p<\infty )$ is reflexive, we see that this conclusion is the consequence of  Kakutani's Theorem and our
Lemma~\ref{le2.3}.

\begin{prop}\label{prop2.4}
	Let	$B(\Omega)$ be the Banach space  of holomorphic functions on a   domain  $\Omega$.  For any $z\in \Omega ,$ the evaluation function $K_{z}$ is bounded on $B(\Omega ).$
	If $B(\Omega)$ is reflexive, $U$ is  an  open subset or a dense subset  of $\Omega $,   then
	$$
	\bigvee_{z\in U}\{K_{z}\}=(B(\Omega))^{*}.
	$$
\end{prop}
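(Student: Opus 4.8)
The plan is to establish the nontrivial inclusion $\bigvee_{z\in U}\{K_z\}\supseteq (B(\Omega))^{*}$ (the reverse inclusion being immediate, since each $K_z$ lies in $(B(\Omega))^{*}$ by the standing boundedness hypothesis) through a standard Hahn--Banach duality argument, the crucial input being reflexivity, which converts an abstract annihilating functional on $(B(\Omega))^{*}$ into a genuine holomorphic function on $\Omega$. I would set $M=\bigvee_{z\in U}\{K_z\}$, a closed subspace of $(B(\Omega))^{*}$, and argue by contradiction: assume $M\neq (B(\Omega))^{*}$. Then by the Hahn--Banach theorem there exists a nonzero $\Phi\in (B(\Omega))^{**}$ that vanishes identically on $M$; in particular $\Phi(K_z)=0$ for every $z\in U$.

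Next I would invoke the reflexivity of $B(\Omega)$. The canonical embedding $B(\Omega)\hookrightarrow (B(\Omega))^{**}$ is then surjective, so there is some $f\in B(\Omega)$ with $\Phi=\widehat f$, where $\widehat f(\ell)=\ell(f)$ for $\ell\in (B(\Omega))^{*}$; since $\Phi\neq 0$ we have $f\neq 0$. Evaluating $\Phi$ against the bounded evaluation functionals yields $f(z)=K_z(f)=\widehat f(K_z)=\Phi(K_z)=0$ for all $z\in U$. Thus $f$ is a holomorphic function on $\Omega$ whose zero set contains $U$.

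The contradiction then comes from the structure of this zero set, treated according to the two cases. If $U$ is open, then $f$ vanishes on a nonempty open subset of the connected domain $\Omega$, so by the identity theorem $f\equiv 0$. If $U$ is dense, then $f$, being continuous, vanishes on $\overline{U}=\Omega$, so again $f\equiv 0$. In either case $f=0$, contradicting $f\neq 0$. Hence no proper closed subspace can contain $\{K_z\}_{z\in U}$, and we conclude $\bigvee_{z\in U}\{K_z\}=(B(\Omega))^{*}$.

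I do not expect a serious analytic obstacle here; the argument is essentially the reflexivity-driven realization of a bidual annihilator as a holomorphic function, combined with an elementary vanishing principle. The points requiring care are, first, that the boundedness of $K_z$ (Assumption~{\bf(a)}) is precisely what makes each $K_z$ an element of $(B(\Omega))^{*}$, so that $M$ is well defined and the identity $\widehat f(K_z)=K_z(f)$ is legitimate; and second, that $\bigvee$ is understood as the \emph{closed} linear span, so that surjectivity of the conclusion (equality with $(B(\Omega))^{*}$, not mere density) follows from the Hahn--Banach separation rather than only a density statement.
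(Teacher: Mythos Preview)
Your proof is correct and follows essentially the same approach as the paper: set $M=\bigvee_{z\in U}\{K_z\}$, assume $M\neq (B(\Omega))^{*}$, use Hahn--Banach to obtain a nonzero annihilator in the bidual, realize it via reflexivity as a nonzero $f\in B(\Omega)$ vanishing on $U$, and derive a contradiction from the identity theorem. Your version is in fact slightly more explicit than the paper's in distinguishing the open and dense cases for $U$.
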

\begin{proof}
	Write $M=\bigvee_{z\in U}\{K_{z}\},$ if $M\neq (B(\Omega))^{*},$ then there is a non-zero $F\in (B(\Omega))^{**}$ such that $F|_M=0.$ Here $(B(\Omega ))^{**}$ is the quadratic dual space of $B(\Omega ).$ Since $B(\Omega)$ is reflexive, there is a nonzero function  $f\in B(\Omega)$ which satisfies $F=f^{**}\in (B(\Omega ))^{**},$ thus
	$$
	f(z)=K_{z}(f)=F(K_{z})=0.
	$$
	This implies that $f=0.$ Thus $F=f^{**}=0,$ which contradicts to $F\neq 0.$  This contradiction  completes the proof.
\end{proof}

In general,  if $B(\Omega )$  is not reflexive,  then  the set of evaluation functions may not span the dual space of $B(\Omega ).$ For example, it is well-known that $A^{1}(\mathbb{D}),$ the Bergman space on $\mathbb{D},$ is not reflexive, its dual space is the Bloch space $\mathfrak{B}$ (see \cite{HKZ}). The following proposition verifies our conclusion.

\begin{prop}\label{prop2.5}
	Suppose $A^{1}(\mathbb{D})$ is the Bergman space on the unit disc $\mathbb{D}$ in the complex plane $\mathbb{C}.$ Then for any $z\in \mathbb{D} ,$ the evaluation function $K_{z}$ is bounded on  $A^{1}(\mathbb{D}),$ and
	$$
	\bigvee_{z\in \mathbb{D} }\{K_{z}\}\neq  \mathfrak{B}.
	$$
\end{prop}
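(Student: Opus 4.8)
The plan is to treat the two assertions separately, using the identification of $(A^{1}(\mathbb{D}))^{*}$ with the Bloch space $\mathfrak{B}$ recalled just before the statement.

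For the boundedness of $K_{z}$, I would invoke the standard pointwise estimate for Bergman functions: there is a constant $C>0$ so that for every $f\in A^{1}(\mathbb{D})$ and every $z\in\mathbb{D}$,
$$
|f(z)|\le \frac{C}{(1-|z|^{2})^{2}}\,\|f\|_{A^{1}(\mathbb{D})},
$$
which follows from the sub-mean-value property of $|f|$ over a Bergman disc centred at $z$. Fixing $z$, this gives $|K_{z}(f)|=|f(z)|\le C(1-|z|^{2})^{-2}\|f\|_{A^{1}}$, so $K_{z}$ is a bounded linear functional; this verifies the first claim (and confirms that $A^{1}(\mathbb{D})$ satisfies Assumption {\bf(a)}).

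For the second assertion, the key is to locate the images of the $K_{z}$ inside $\mathfrak{B}$. Under the integral pairing realizing $(A^{1})^{*}\cong\mathfrak{B}$, the evaluation functional $K_{z}$ is represented by the Bergman reproducing kernel, i.e.\ by the Bloch function
$$
k_{z}(w)=\frac{1}{\pi(1-\overline{z}\,w)^{2}},\qquad w\in\mathbb{D}.
$$
I would check this by testing against polynomials (dense in $A^{1}$): the reproducing property $\int_{\mathbb{D}} f\,\overline{k_{z}}\,dA=f(z)$ shows that $K_{z}$ and $k_{z}$ induce the same functional. The decisive observation is that for each fixed $z\in\mathbb{D}$ the function $k_{z}$ extends holomorphically across $\partial\mathbb{D}$ (its only pole sits at $1/\overline{z}$, of modulus $>1$); consequently $(1-|w|^{2})|k_{z}'(w)|\to 0$ as $|w|\to1$, so $k_{z}$ lies in the little Bloch space $\mathfrak{B}_{0}$.

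Finally I would conclude by a closed-subspace argument. Since $\mathfrak{B}_{0}$ is a norm-closed proper subspace of $\mathfrak{B}$ and contains every $k_{z}$, the norm-closed linear span satisfies
$$
\bigvee_{z\in\mathbb{D}}\{K_{z}\}\subseteq \mathfrak{B}_{0}\subsetneq\mathfrak{B},
$$
which yields the claimed inequality. The main point requiring care is the correct identification of $K_{z}$ with $k_{z}$ under the specific duality pairing used in \cite{HKZ} (including any limiting/regularization in the definition of the pairing); once that representation and the containment $k_{z}\in\mathfrak{B}_{0}$ are secured, the separation from $\mathfrak{B}$ follows immediately from the properness of $\mathfrak{B}_{0}$.
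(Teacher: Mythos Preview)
Your proof is correct and takes a genuinely different route from the paper's. The paper argues by contradiction: assuming $\bigvee_{z\in\mathbb D}\{K_z\}=\mathfrak B$, it uses Lemma~2.3 (the unit ball of $A^1(\mathbb D)$ is a normal family) to extract a locally uniformly convergent subsequence from any sequence in $(A^1(\mathbb D))_1$, then shows this subsequence converges weakly, forcing $(A^1(\mathbb D))_1$ to be weakly compact; Kakutani's theorem would then make $A^1(\mathbb D)$ reflexive, a contradiction. In other words, the paper runs the sufficiency argument of Theorem~1.1 in this concrete case and derives a contradiction from non-reflexivity.

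Your argument is more elementary and more explicit for this particular space: you identify $K_z$ with the Bergman kernel $w\mapsto (1-\bar z w)^{-2}$ under the $(A^1)^*\cong\mathfrak B$ pairing, observe that each such function extends holomorphically across $\partial\mathbb D$ and hence lies in $\mathfrak B_0$, and conclude by the properness of the closed subspace $\mathfrak B_0\subsetneq\mathfrak B$. This is cleaner for the example at hand and avoids the weak-compactness machinery entirely. The trade-off is that the paper's argument, while heavier, is the template for the general Theorem~1.1 and makes no use of any explicit kernel formula or of the existence of a distinguished proper closed subspace like $\mathfrak B_0$; it would apply verbatim to any non-reflexive $B(\Omega)$ satisfying Assumption~{\bf(a)}. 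Your approach, by contrast, exploits structure specific to the $A^1$--Bloch duality.
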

\begin{proof}
	It is obvious that $K_{z}$ is bounded for any $z\in \mathbb{D}.$ We prove this proposition by contradiction.
	Assume that
	$$
	\bigvee_{z\in \mathbb{D} }\{K_{z}\}= \mathfrak{B}.
	$$
	We are to prove that $(A^{1}(\mathbb{D}))_{1},$  the unit ball of $A^{1}(\mathbb{D}),$  is weakly compact, further $A^{1}(\mathbb{D})$ is reflexive by  Kakutani's Theorem.  Assume $\{f_{k}\}\subset (A^{1}(\mathbb{D}))_{1}.$ By Lemma 2.3,  there is a subsequence $\{f_{k_{j}}\}$ such that $f_{k_{j}}(z)\rightarrow  f(z)$ uniformly on compact subsets of $\mathbb{D} ,$ where $f$ is a holomorphic function on $\mathbb{D} .$  Thus for any $z\in \mathbb{D} ,$
	$$
	K_{z}(f_{k_{j}})=f_{k_{j}}(z)\rightarrow f(z).
	$$
	This implies that $ \{K_{z}(f_{k_{j}})\}$ is convergent. Further for any finite linear combination $\sum_{i=1}^{m}\alpha_{i}K_{z_{i}},$
	$\{ \sum_{i=1}^{m}\alpha_{i}K_{z_{i}}(f_{k_{j}})\}$ is also convergent. Since
	$$
	\bigvee_{z\in \mathbb{D} }\{K_{z}\}= \mathfrak{B},
	$$
	we see that  $\{F(f_{k_{j}})\}$ is convergent for any $F\in   \mathfrak{B}.$  Since $F$ is chosen arbitrarily, we know that there is a $G\in (A^{1}(\mathbb{D}))^{**}$ such that
	$f_{k_{j}}^{**}(F)\rightarrow G(F)$ for any $F\in  \mathfrak{B}$ since $ ((A^{1}(\mathbb{D}))^{**})_{1}$ is weakly star  compact. Write $f_{G}(z)=G(K_{z})$ for any $z\in \mathbb{D} ,$ we see that
	$$
	f_{G}(z)=G(K_{z})=\lim_{j\rightarrow \infty }f_{k_{j}}^{**}(K_{z})=\lim_{j\rightarrow \infty }f_{k_{j}}(z)=f(z).
	$$
	Hence, $f_{G}$ is a holomorphic function. Obviously, for any finite linear combination $P=\sum_{i=1}^{m}\alpha_{i}K_{z_{i}},$
	$$
	G(P)=\sum_{i=1}^{m}\alpha_{i}G(K_{z_{i}})
	=\sum_{i=1}^{m}\alpha_{i}f_{G}(z_{i}).
	$$
	For $r\in (0,1), $ write $f_{r}(z)=f_{G}(rz), $ then $f_{r}\in A^{1}(\mathbb{D}).$  Thus $P(f_{r})=\sum_{i=1}^{m}\alpha_{i}f_{r}(z_{i}).$
	Note
	$$
	\sum_{i=1}^{m}\alpha_{i}f_{r}(z_{i})\rightarrow \sum_{i=1}^{m}\alpha_{i}f_{G}(z_{i}) \hskip 5mm \mbox{as}\hskip 5mm r\rightarrow 1^{-},
	$$
	we may write $P(f_{G})=\sum_{i=1}^{m}\alpha_{i}f_{G}(z_{i}).$
	For any $F\in \mathfrak{B},$ assume
	$$
	P_{m}=\sum_{i=1}^{k_{m}}\alpha_{i}^{(m)}K_{z_{i}^{(m)}}\rightarrow F\hskip 4mm \mbox{in}\hskip 4mm  \mathfrak{B} {\rm\ \ as\ \ } m\to\infty,
	$$ then
	$$
	G(F)=\lim_{m\rightarrow \infty }G(P_{m})=\lim_{m\rightarrow \infty }P_{m}(f_{G}).
	$$
	Define
	$$
	F(f_{G})=\lim_{m\rightarrow \infty }P_{m}(f_{G})=G(F).
	$$
	Obviously, $F(f_{G})$ is well-defined, and $F(f_{k_{j}})\rightarrow  F(f_{G})$ as $j\to\infty$. Choose $F_{k}\in (\mathfrak{B})_{1}, $  the unit ball of $\mathfrak{B},$ such that
	$$
	| F_{k}(f_{r})|\geq \|f_{r}\|_{A^{1}(\mathbb{D})}-\frac{1}{k} \hskip 5mm \mbox{for any }\hskip 5mm k\in \mathbb{N}.
	$$
	Note
	$$
	|F_{k}(f_{r})|=\lim_{j\rightarrow \infty }|F_{k}(f_{k_{j}r})|\leq \sup_{j}\|F_{k}\|\cdot \|f_{k_{j}r}\|_{A^{1}(\mathbb{D})}\leq 1
	$$
	since $F_{k}\in (\mathfrak{B})_{1}, f_{k_{j}}\in (A^{1}(\mathbb{D}))_{1}.$
	We see that $\sup_{r\in (0,1)}\|f_{r}\|_{A^{1}(\mathbb{D})}\leq 1.$ This implies that $f_G\in A^{1}(\mathbb{D}).$ Thus $G=f_{G}^{**}.$ This shows that $(A^{1}(\mathbb{D}))_{1}$ is weakly compact. However, $A^{1}(\mathbb{D})$ is not reflexive, this contradicts to Kakutani's Theorem. We  complete the proof.
	\end{proof}

\begin{proof}[Proof of Theorem~\ref{th1.1}]
	We need only to prove the sufficiency by Proposition~\ref{prop2.4}.  Assume
	$$
	\bigvee_{z\in \Omega}\{K_{z}\}=(B(\Omega))^{*}.
	$$
	We will prove that $(B(\Omega ))_{1}$   is weakly compact, and further $B(\Omega )$ is reflexive by  Kakutani's Theorem.
Assume $\{f_{k}\}\subset (B(\Omega ))_{1}.$ By Lemma~\ref{le2.3},  there is a subsequence $\{f_{k_{j}}\}$ such that $f_{k_{j}}(z)\rightarrow  f(z)$ uniformly on compact subsets of $\Omega ,$ where $f$ is a holomorphic function on $\Omega.$  Similar to the proof of Proposition~\ref{prop2.5}, we may find a $G\in (B(\Omega))^{**}$ such that
	$f_{k_{j}}^{**}(F)\rightarrow G(F)$ for any $F\in  (B(\Omega ))^{*}$ since $ ((B(\Omega ))^{**})_{1}$ is weakly star  compact. Write $f_{G}(z)=G(K_{z})$ for any $z\in \Omega ,$ we see that
	$$
	f_{G}(z)=G(K_{z})=\lim_{j\rightarrow \infty }f_{k_{j}}^{**}(K_{z})=\lim_{j\rightarrow \infty }f_{k_{j}}(z)=f(z).
	$$
	Hence, $f_{G}$ is a holomorphic function. For any $F\in (B(\Omega ))^{*},$  $F(f_{G})$ is well-defined, and $F(f_{k_{j}})\rightarrow  F(f_{G}).$ By $
	\bigvee_{z\in \Omega}\{K_{z}\}=(B(\Omega))^{*}$ again, there exists a sequence $\{P_{m}\}\subset \bigvee_{z\in \Omega}\{K_{z}\}$ such that
$$
	P_{m}=\sum_{i=1}^{k_{m}}\alpha_{i}^{(m)}K_{z_{i}^{(m)}}\rightarrow F\hskip 4mm \mbox{in}\hskip 4mm  (B(\Omega))^{*} {\rm\ \ as\ \ } m\to\infty.
	$$
For arbitrary $\alpha \in \mathbb{C},$ since
\begin{eqnarray*}
F(\alpha f_{G})&=&\lim_{m\rightarrow \infty }P_{m}(\alpha f_{G})\\
&=&\alpha \lim_{m\rightarrow \infty }P_{m}( f_{G})\\
&=&\alpha \lim_{m\rightarrow \infty }G(P_{m})\\
&=&\alpha G(F)\\
&=&\alpha F(f_{G}),
\end{eqnarray*}
we see that  $F(\alpha f_{G})=\lim\limits_{m\rightarrow \infty }P_{m}(\alpha f_{G})$ is well-defined, and $F(\alpha f_{G})=\alpha F(f_{G}).$
Define 
	$$
	\|f_{G}\|:=\sup_{F\in ((B(\Omega ))^{*})_{1}}|F(f_{G})|.
	$$
Note that
$$
\lim_{j\rightarrow \infty }F(f_{k_{j}})= F(f_{G}),
$$
 and
$$
\|F(f_{k_{j}})\|\leq \|F\|\|f_{k_{j}}\|\leq 1,
$$
we get that $\|f_{G}\|<\infty .$ We now prove  $f_{G}\in B(\Omega )$ by contradiction. Suppose $f_{G}\notin B(\Omega ),$ then we define
$$
B=\bigvee\{B(\Omega ), f_{G}\},
$$
the space spanned by $B(\Omega )$ and $f_{G}.$ That is
$$
B=\{f+\alpha  f_{G}|f\in B(\Omega ),\alpha \in \mathbb{C}\}.
$$
For any $g=f+\alpha  f_{G},$ define the norm of $g$ as
$$
\|g\|_{1}=\|f\|+|\alpha |\| f_{G}\|.
$$
 Then $\|\cdot \|_{1}|_{B(\Omega )}=\|\cdot \|.$ It is easy to check that $B^{*}=(B(\Omega ))^{*}.$ In fact, for any $F\in (B(\Omega ))^{*},$ $F$ is well-defined on $B$ since $F( f_{G})$ is well-defined,  and
$$
|F(f+\alpha  f_{G})|=|F(f)+\alpha  F(f_{G})|\leq \|F\|(\|f\|+|\alpha |\|f_{G}\|)=\|F\|\|f+\alpha  f_{G}\|.
$$
Hence $F\in B^{*}.$ Conversely,  if $F\in B^{*},$ then for any $f\in B(\Omega ),$ $|F(f)|\leq \|F\|\|f\|_{1}=\|F\|\|f\|,$ this shows that $F\in (B(\Omega ))^{*}.$ By the assumption  $f_{G}\notin B(\Omega ),$ there is an $F_{0}\in B^{*}$ such that $F_{0}|_{B(\Omega )}=0,$ and $F_{0}(f_{G})\neq 0.$ However, we know that there is a sequence $\{f_{k}\}\subset B(\Omega )$ such that $F(f_{k})\rightarrow F(f_{G})$ for   any $F\in (B(\Omega ))^{*}.$ This implies that $F_{0}(f_{G})=0$ since  $B^{*}=(B(\Omega ))^{*}.$ This contradict shows that $f_{G}\in B(\Omega ).$
Thus $G=f_{G}^{**}.$ This shows that $(B(\Omega ))_{1}$ is weakly compact, and then $ B(\Omega )$ must be reflexive by Kakutani's Theorem.
	
	Now let $U$ be  any open subset or dense subset $U$ of $\Omega.$ We are to prove that
	$$
	\bigvee_{z\in U}\{K_{z}\}=(B(\Omega))^{*}
	$$
	if and only if
	$$
	\bigvee_{z\in \Omega}\{K_{z}\}=(B(\Omega))^{*}.
	$$
	We prove this argument by contradiction.
	Assume that  $$
	\bigvee_{z\in \Omega}\{K_{z}\}=(B(\Omega))^{*}
	$$
	and  there is an open subset or dense subset $U$ of $\Omega$ with
	$$
	\bigvee_{z\in U}\{K_{z}\}\neq (B(\Omega))^{*}.
	$$
	Then there is a non-zero $G\in (B(\Omega))^{**}$ such that $G|_{  \bigvee_{z\in U}\{K_{z}\}}=0.$
	Since $B(\Omega )$ is  reflexive, there is a non-zero $g\in B(\Omega)$ such that $G=g^{**}.$
	Thus for any $z\in U,$
	$$
	g(z)=g^{**}(K_{z})=G(K_{z})=0.
	$$
	Further $g=0$ on $\Omega$ by the uniqueness of the extension of holomorphic functions. The contradiction completes the proof.
\end{proof}
\begin{rem}It is well-known that the dual space of $H ^ {1}(\mathbb{B}_{n})$ is BMOA,  the holomorphic function space with bounded mean oscillation. According to Theorem 1.1, the evaluation functions on $H ^ {1}(\mathbb{B}_{n})$ cannot span  BMOA since $H ^ {1}(\mathbb{B}_{n})$ is not reflexive. Some other classical holomorphic function spaces, such as the Dirichlet space $\mathfrak{D}^{1}(\mathbb{D})$, the Hardy--Sobolev space $H_{\beta }^{1}(\mathbb{B}_{n}) (\beta \in \mathbb{R}),$  are also non reflexive, so the evaluation functions on them cannot span their dual spaces.
\end{rem}
\begin{prop}\label{prop2.8}
	Let	$B(\Omega)$ be the Banach space  of holomorphic functions on   $\Omega$.  For any $z\in \Omega ,$ the evaluation function $K_{z}$ is bounded on $B(\Omega ).$
	If  $\{z_{k}\}\subset \Omega $ is a sequence which convergences to  $w\in \Omega,$ then
	$$
	\|K_{z_{k}}-K_{w}\|\rightarrow 0 .
	$$
\end{prop}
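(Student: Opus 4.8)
The plan is to reduce the statement entirely to the equicontinuity of the unit ball that was already established in Lemma~\ref{le2.3}. By the definition of the dual norm,
$$
\|K_{z_k}-K_w\|=\sup_{f\in (B(\Omega))_1}|K_{z_k}(f)-K_w(f)|=\sup_{f\in (B(\Omega))_1}|f(z_k)-f(w)|,
$$
so the assertion is precisely that $\sup_{f\in (B(\Omega))_1}|f(z_k)-f(w)|\to 0$, i.e. that the map $z\mapsto K_z$ is norm-continuous from $\Omega$ into $(B(\Omega))^{*}$. The decisive observation is that this supremum must be controlled uniformly over the whole unit ball at once, and that uniformity is exactly what a normal (equicontinuous) family supplies.

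First I would fix a closed ball $\overline{D}=\{z:|z-w|\le r\}\subset\Omega$ centered at $w$; since $\Omega$ is open this is possible for small $r$, and $\overline{D}$ is compact. Because $z_k\to w$, we have $z_k\in\overline{D}$ for all sufficiently large $k$. By Lemma~\ref{le2.3} (which itself rests on Lemmas~\ref{le2.1} and~\ref{le2.2}), the unit ball $(B(\Omega))_1$ is a normal family, and in particular it is equicontinuous on the compact set $\overline{D}$. Invoking equicontinuity at the point $w$: given $\varepsilon>0$, there is $\delta\in(0,r)$ such that for every $f\in (B(\Omega))_1$ and every $z\in\overline{D}$ with $|z-w|<\delta$ one has $|f(z)-f(w)|<\varepsilon$. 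The same $\delta$ serves all $f$ in the unit ball simultaneously, which is the whole point of equicontinuity; taking the supremum over such $f$ then gives $\|K_z-K_w\|\le\varepsilon$ whenever $|z-w|<\delta$. Finally, since $z_k\to w$, there is $N$ with $|z_k-w|<\delta$ for all $k\ge N$, hence $\|K_{z_k}-K_w\|\le\varepsilon$ for all $k\ge N$; as $\varepsilon$ is arbitrary, $\|K_{z_k}-K_w\|\to 0$.

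The step I expect to be the crux, although it has effectively been done for us in the preceding lemmas, is the passage from the pointwise convergence $f(z_k)\to f(w)$ (trivial for each fixed $f$ by holomorphy) to the estimate uniform over the entire unit ball. This is precisely where equicontinuity of the normal family, rather than mere continuity of the individual functions, is indispensable; without the uniform boundedness on compact subsets furnished by Lemma~\ref{le2.2} one could not pass the supremum inside. Everything else is routine.
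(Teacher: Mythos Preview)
Your proof is correct, and both approaches ultimately rest on Lemma~\ref{le2.3} (the unit ball of $B(\Omega)$ is a normal family, hence equicontinuous on compacta). The paper, however, takes a more circuitous route: it argues by contradiction, choosing $f_k\in (B(\Omega))_1$ with $|f_k(z_k)-f_k(w)|\ge\epsilon_0$, then passes to the bidual $(B(\Omega))^{**}$ to extract a weak-$*$ convergent subsequence $f_{k_j}^{**}\to F_0$, identifies the pointwise limit $f(z)=F_0(K_z)$, and finally invokes normality to upgrade to uniform convergence on a compact set containing $w$ and the $z_{k_j}$, contradicting the lower bound. Your argument bypasses the contradiction setup, the bidual, and the subsequence extraction entirely by reading the conclusion directly off the definition of equicontinuity at the single point $w$; once one has $\sup_{f\in(B(\Omega))_1}|f(z)-f(w)|\le\varepsilon$ for $|z-w|<\delta$, the result is immediate. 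Your route is shorter and conceptually cleaner; the paper's detour through weak-$*$ compactness of the unit ball in the bidual is logically sound but plays no essential role once equicontinuity is available.
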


\begin{proof}
	Assume the contrary, there is an $\epsilon_{0}>0$ and a sequence $\{f_{k}\}\subset (B(\Omega ))_{1}$ , the unit ball of $B(\Omega ),$ such that
	$$
	|(K_{z_{k}}-K_{w})(f_{k})|\geq \epsilon_{0}.
	$$
	That is
	$$
	|f_{k}(z_{k})-f_{k}(w)|\geq \epsilon_{0}.
	$$
	Write $F_{k}=f_{k}^{**}\in [(B(\Omega ))^{**}]_{1},$ where $(B(\Omega ))^{**}$ is the quadratic dual space of $B(\Omega ),$ $ [(B(\Omega ))^{**}]_{1}$ is the unit ball of $(B(\Omega ))^{**}.$ Then there is a subsequence $\{F_{k_{j}}\}$ such that $F_{k_{j}}$ converges weakly star to $F_{0}\in (B(\Omega ))^{**}.$ In particular,
	$$
	F_{k_{j}}(K_{z})\rightarrow F_{0}(K_{z}) \quad {\rm as\ } j\to\infty.
	$$
	On the other hand,
	$$
	F_{k_{j}}(K_{z})=K_{z}(f_{k_{j}})=f_{k_{j}}(z),
	$$
	hence there is a holomorphic function $f$ on $\Omega $ such that
	$$
	f_{k_{j}}(z)\rightarrow f(z)=F_{0}(K_{z})\quad {\rm as\ } j\to\infty.
	$$
	Assume $K\subset \Omega $ is any  compact subset of $\Omega ,$  then
	\begin{eqnarray*}
		\sup_{k_{j}}\sup_{z\in K}|f_{k_{j}}(z)|&\leq &\sup_{k_{j}}\|f_{k_{j}}\|\cdot \sup_{z\in K}\|K_{z}\|
		\leq\sup_{z\in K}\|K_{z}\| <\infty .
	\end{eqnarray*}
	By Lemma~\ref{le2.3}, we know that $\{f_{k_{j}}\}$ is a normal family. Further
	$$
	f_{k_{j}}|_{K}\rightarrow f|_{K}\hskip 5mm \mbox{uniformly}\hskip 5mm \mbox{as}\hskip 5mmj\rightarrow \infty .
	$$
	This contradicts to $|f_{k}(z_{k})-f_{k}(w)|\geq \epsilon_{0}.$ This means $\|K_{z_{k}}-K_{w}\|\rightarrow 0 .$
\end{proof}

Just as we mentioned  earlier,   J. S. Manhas and R. Zhao  \cite{MZ} set an counterexample, for  $\|K_{z}\|$, the norm of reproducing Kernel of the Hilbert space,   even if it tends to infinity, the normalized reproducing kernel $k_{z}=\frac{K_{z}}{\|K_{z}\|}$ may not converge weakly to 0. However, if the polynomials are dense in the space, this phenomenon won't occur.

\begin{prop}\label{prop2.9}
	Let	$B(\Omega)$ be the Banach space  of holomorphic functions on   $\Omega$.
	Assume that for any $z\in \Omega ,$   the evaluation function $K_{z}$ is bounded on $B(\Omega ) $ and $P[\Omega ]$, the ring of polynomials on $\Omega ,$ is dense in $B(\Omega ).$ For any sequence $\{z_{k}\}\subset \Omega $ and $\zeta \in \partial \Omega ,$ if  $\|K_{z_{k}}\|\rightarrow \infty $ as $z_{k}\rightarrow \zeta  ,$ then
	$$
	k_{z_{k}}=\frac{K_{z_{k}}}{\|K_{z_{k}}\|}\stackrel{weak^{*}}{\longrightarrow} 0  \hskip 4mm \mbox{as} \hskip 4mm k\rightarrow \infty .
	$$
\end{prop}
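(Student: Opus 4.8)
The plan is to unwind the definition of weak\textsuperscript{*} convergence and reduce the statement to a pointwise estimate. Since each $k_{z_k}$ lies in $(B(\Omega))^{*}$ and the weak\textsuperscript{*} topology on $(B(\Omega))^{*}$ is the one induced by the predual $B(\Omega)$, the assertion $k_{z_k}\xrightarrow{weak^{*}}0$ means precisely that
$$
k_{z_k}(f)=\frac{K_{z_k}(f)}{\|K_{z_k}\|}=\frac{f(z_k)}{\|K_{z_k}\|}\longrightarrow 0,\qquad \forall f\in B(\Omega).
$$
Two standing facts will drive the argument: first, $\|k_{z_k}\|=\|K_{z_k}\|/\|K_{z_k}\|=1$ for every $k$, so the sequence of functionals is uniformly bounded; second, by hypothesis the polynomial ring $P[\Omega]$ is dense in $B(\Omega)$.

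First I would verify the convergence on the dense subset $P[\Omega]$. Because $\Omega$ is a bounded domain, $\overline{\Omega}$ is compact, so every polynomial $p$ is continuous and hence bounded on $\overline{\Omega}$; in particular $p(z_k)\to p(\zeta)$ as $z_k\to\zeta\in\partial\Omega$, so $\{p(z_k)\}$ is a bounded scalar sequence. Combining this with the assumption $\|K_{z_k}\|\to\infty$ yields
$$
k_{z_k}(p)=\frac{p(z_k)}{\|K_{z_k}\|}\longrightarrow 0 .
$$
This is exactly the place where the density hypothesis is essential and where the Manhas--Zhao pathology is excluded: in their example the basis element $z^{-1}$ blows up at the boundary point $0$ and cannot be approximated by polynomials, whereas here polynomials stay bounded up to the boundary.

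Next I would upgrade from the dense subset to all of $B(\Omega)$ using the uniform bound, via a routine $\varepsilon/2$ argument. Fix $f\in B(\Omega)$ and $\varepsilon>0$; by density choose $p\in P[\Omega]$ with $\|f-p\|<\varepsilon/2$. Then
$$
|k_{z_k}(f)|\le |k_{z_k}(f-p)|+|k_{z_k}(p)|\le \|k_{z_k}\|\,\|f-p\|+|k_{z_k}(p)|<\frac{\varepsilon}{2}+|k_{z_k}(p)|,
$$
and the last term is $<\varepsilon/2$ for all sufficiently large $k$ by the polynomial case. Hence $k_{z_k}(f)\to 0$ for every $f\in B(\Omega)$, which is the desired weak\textsuperscript{*} convergence of the full sequence (not merely of a subsequence, since the estimate is direct).

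I do not expect a genuine obstacle here: the whole content is the elementary interplay between the uniform bound $\|k_{z_k}\|=1$ and convergence on a dense set. The only point demanding care is the polynomial step, which silently relies on $\Omega$ being bounded so that $\overline{\Omega}$ is compact and polynomials are bounded there; this is precisely the structural feature that fails in the punctured-disk counterexample and makes the density assumption indispensable.
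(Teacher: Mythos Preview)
Your proof is correct and follows essentially the same route as the paper's own argument: first check that $k_{z_k}(p)=p(z_k)/\|K_{z_k}\|\to 0$ for polynomials using that $p$ is bounded on $\overline{\Omega}$, then pass to arbitrary $f\in B(\Omega)$ by the density of $P[\Omega]$ together with the uniform bound $\|k_{z_k}\|=1$ via an $\varepsilon/2$ estimate. Your added remark on why the boundedness of $\Omega$ is needed and how this excludes the Manhas--Zhao example is a nice gloss, but methodologically there is no difference from the paper.
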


\begin{proof}
	Assume $P\in P[\Omega ],$ it is obvious that $K_{z_{k}}(P)\rightarrow P(\zeta )$ as $k\rightarrow \infty .$ Since $\|K_{z_{k}}\|\rightarrow \infty ,$ we see
	$$
	k_{z_{k}}(P)=\frac{K_{z_{k}}}{\|K_{z_{k}}\|}(P)=\frac{P(z_{k})}{\|K_{z_{k}}\|}\rightarrow 0 \quad {\rm as\ } k\to\infty.
	$$
	For any $f\in B(\Omega),$ take a sequence $\{P_{m}\}\subset P[\Omega ]$ such that $\|P_{m}-f\|\rightarrow 0$ as $m\rightarrow \infty .$ Then
	\begin{eqnarray*}
		|k_{z_{k}}(f)|&=&\bigg|\frac{K_{z_{k}}}{\|K_{z_{k}}\|}(f)\bigg|\\
		&\leq &\bigg|\frac{K_{z_{k}}}{\|K_{z_{k}}\|}(f)-\frac{K_{z_{k}}}{\|K_{z_{k}}\|}(P_{m})\bigg|+\bigg|\frac{K_{z_{k}}}{\|K_{z_{k}}\|}(P_{m})\bigg|\\
		&\leq & \bigg\|\frac{K_{z_{k}}}{\|K_{z_{k}}\|}\bigg\|\cdot \|f-P_{m}\|+\bigg|\frac{K_{z_{k}}}{\|K_{z_{k}}\|}(P_{m})\bigg|\\
		&=& \|f-P_{m}\|+\bigg|\frac{K_{z_{k}}}{\|K_{z_{k}}\|}(P_{m})\bigg|.
	\end{eqnarray*}
	This shows that $k_{z_{k}}(f)\rightarrow 0.$ That is
	$$
	k_{z_{k}}=\frac{K_{z_{k}}}{\|K_{z_{k}}\|}\stackrel{weak^{*}}{\longrightarrow} 0  \hskip 4mm \mbox{as} \hskip 4mm k\rightarrow \infty .
	$$
	The proof is complete.
\end{proof}

It should be noted that, similar to the reproducing kernel of the Hilbert space of analytic functions, the boundary behavior of the evaluation function on the Banach space of analytic functions depends on the structure of the space. For example, as we know, for $1\leq p<\infty ,$ if $\beta >n/p, $  then $\|K_{z}\|$, the norm  of the evaluation function $K_{z}$ on Hardy--Sobolev space $H_{\beta }^{p}$, is bounded on $ \mathbb{B}_{n} .$  As any function in  $H_{\beta }^{p} (\mathbb{B}_{n})$ is continuous on the boundary of $\mathbb{B}_{n},$  it can be seen that  when the  sequence $\{z_{k}\}$  in  $\mathbb{B}_{n} $ converges  to a point $\zeta \in \partial \mathbb{B}_{n},$  then according to the norm in $H_{\beta }^{p} (\mathbb{B}_{n}), $  $K_{z_{k}}$ converges  to $K_{\zeta },$ the evaluation function at $\zeta .$ Thus
$$
\|k_{z_{k}}-k_{\zeta }\|\rightarrow 0 \hskip 5mm \mbox{as}\hskip 4mm k\rightarrow \infty ,
$$
where $k_{z_{k}}=\frac{K_{z_{k}}}{\|K_{z_{k}}\|}, k_{\zeta }=\frac{K_{\zeta}}{\|K_{\zeta}\|}.$
If $\beta \leq n/p, $ then  $\|K_{z}\|\rightarrow \infty $  as $z\rightarrow \partial \mathbb{B}_{n} ,$ and
$$
k_{z}=\frac{K_{z}}{\|K_{z}\|}\stackrel{weak^{*}}{\longrightarrow} 0\hskip 5mm \mbox{as}\hskip 5mm  z\rightarrow  \partial \mathbb{B}_{n}.
$$


\medskip

\section{ Characterization of composition operators on $B(\Omega)$}
\setcounter{equation}{0}

In this section we give the characterization of composition operators on $B(\Omega)$. First, we have the following result.

\begin{prop}\label{le3.1}
	Let	$B(\Omega)$ be the Banach space  of holomorphic functions on   $\Omega$. Assume that  the evaluation function $K_{z}$ is bounded on $B(\Omega ) $   for any $z\in \Omega.$
	Let $\varphi=(\varphi_{1},\ldots ,\varphi_{n}):\Omega\to\Omega$ be a holomorphic self-map such that
	$C_{\varphi}:B(\Omega)\to B(\Omega)$ is bounded. Then
	$$
	C_{\varphi}^{*}K_{z}=K_{\varphi (z)}.
	$$
\end{prop}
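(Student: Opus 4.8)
The plan is to unwind the definition of the Banach-space adjoint and to evaluate both sides of the claimed identity on an arbitrary $f\in B(\Omega)$. Recall that the adjoint $C_{\varphi}^{*}$ maps $(B(\Omega))^{*}$ into itself and acts on a functional $\ell$ by $(C_{\varphi}^{*}\ell)(f)=\ell(C_{\varphi}f)$ for all $f\in B(\Omega)$. Since the evaluation function $K_{z}$ is bounded by the standing hypothesis, we have $K_{z}\in (B(\Omega))^{*}$, so $C_{\varphi}^{*}K_{z}$ is a well-defined element of the dual space and it makes sense to compare it with $K_{\varphi(z)}$.

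First I would fix $z\in\Omega$ and an arbitrary $f\in B(\Omega)$ and compute directly, using the definition of the adjoint, then the definition of $C_{\varphi}$, and finally the definition of the evaluation function:
$$
(C_{\varphi}^{*}K_{z})(f)=K_{z}(C_{\varphi}f)=K_{z}(f\circ\varphi)=(f\circ\varphi)(z)=f(\varphi(z)).
$$
Because $\varphi$ maps $\Omega$ into $\Omega$, the point $\varphi(z)$ lies in $\Omega$, so the evaluation function $K_{\varphi(z)}$ is defined and bounded, and $f(\varphi(z))=K_{\varphi(z)}(f)$. Chaining these equalities gives $(C_{\varphi}^{*}K_{z})(f)=K_{\varphi(z)}(f)$.

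Since $f\in B(\Omega)$ was arbitrary, the two functionals $C_{\varphi}^{*}K_{z}$ and $K_{\varphi(z)}$ agree on all of $B(\Omega)$, whence $C_{\varphi}^{*}K_{z}=K_{\varphi(z)}$. I do not expect any genuine obstacle here: the identity is essentially a restatement of the definitions of the composition operator and its adjoint, and the only places where the hypotheses enter are the boundedness of $K_{z}$ and $K_{\varphi(z)}$ (so that the adjoint and the target functional are legitimate elements of $(B(\Omega))^{*}$) together with the self-map property $\varphi(\Omega)\subset\Omega$ (so that $\varphi(z)$ is an admissible evaluation point). The boundedness of $C_{\varphi}$ is assumed so that $C_{\varphi}^{*}$ is itself a bounded operator on the dual.
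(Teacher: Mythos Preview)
Your proof is correct and follows essentially the same approach as the paper: both compute $(C_{\varphi}^{*}K_{z})(f)=K_{z}(C_{\varphi}f)=K_{z}(f\circ\varphi)=f(\varphi(z))=K_{\varphi(z)}(f)$ for arbitrary $f\in B(\Omega)$ and conclude. Your additional remarks on where each hypothesis is used are accurate but not present in the paper's terser version.
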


\begin{proof}
	For any $f\in B(\Omega ),$ we have
	$$
	C_{\varphi}^{*}K_{z}(f)=K_{z}(C_{\varphi }f)=K_{z}(f\circ \varphi)=f(\varphi (z))=K_{\varphi (z)}(f),
	$$
	and hence $C_{\varphi}^{*}K_{z}=K_{\varphi (z)}.$
\end{proof}

The equality  $C_{\varphi}^{*}K_{z}=K_{\varphi (z)}$ is the essential characterization of composition operators on  Banach spaces of holomorphic functions. In fact, for any bounded linear operator $T$ on the Banach space $B(\Omega ),$ if $TK_{z}$ is still an evaluation function for arbitrary $z\in \Omega,$ then $T$ must be a composition operator. Then we have the following

\begin{prop}\label{prop3.2}
	Let	$B(\Omega)$ be the Banach space  of holomorphic functions on   $\Omega$. Assume that  the evaluation function $K_{z}$ is bounded on $B(\Omega ) $   for any $z\in \Omega.$
	Let $T$ be a bounded linear operator on $B(\Omega).$ Then $T$ is a composition operator if and only if  $TK_{z}$ is also an  evaluation function on $B(\Omega)$ for any $z\in \Omega .$
\end{prop}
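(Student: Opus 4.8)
The plan is to prove the two implications separately, reading the stated condition ``$TK_{z}$ is an evaluation function'' as the assertion that the adjoint $T^{*}$ sends each evaluation functional $K_{z}$ to another evaluation functional, in keeping with the identity $C_{\varphi}^{*}K_{z}=K_{\varphi(z)}$ from Proposition~\ref{le3.1}. The forward implication is then immediate: if $T=C_{\varphi}$ for some holomorphic self-map $\varphi$, Proposition~\ref{le3.1} gives $T^{*}K_{z}=C_{\varphi}^{*}K_{z}=K_{\varphi(z)}$ for every $z\in\Omega$, so $T^{*}K_{z}$ is indeed an evaluation function.

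For the converse, suppose that for each $z\in\Omega$ the functional $T^{*}K_{z}$ is an evaluation function, say $T^{*}K_{z}=K_{w}$ for some $w\in\Omega$. First I would check that such a $w$ is uniquely determined by $z$: since $B(\Omega)$ contains all polynomials and polynomials separate the points of $\cn$, the equality $K_{w_{1}}=K_{w_{2}}$ forces $w_{1}=w_{2}$. Hence there is a well-defined map $\psi:\Omega\to\Omega$ with $T^{*}K_{z}=K_{\psi(z)}$. The key computation is the duality identity: for every $f\in B(\Omega)$ and every $z\in\Omega$,
$$
(Tf)(z)=K_{z}(Tf)=(T^{*}K_{z})(f)=K_{\psi(z)}(f)=f(\psi(z)),
$$
so that $Tf=f\circ\psi$ for all $f$; that is, $T=C_{\psi}$.

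It remains to verify that $\psi$ is an admissible symbol, i.e.\ a holomorphic self-map of $\Omega$. Applying the identity above to the coordinate functions $f(z)=z_{i}$, which lie in $B(\Omega)$ because $B(\Omega)$ contains the polynomials, yields $\psi_{i}=Tz_{i}\in B(\Omega)$; hence each component $\psi_{i}$ is holomorphic on $\Omega$, and therefore $\psi=(\psi_{1},\ldots,\psi_{n})$ is a holomorphic map. Its range lies in $\Omega$ by the hypothesis that $T^{*}K_{z}$ is an evaluation function at a genuine point of $\Omega$, which completes the identification of $T$ with the composition operator $C_{\psi}$.

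The main obstacle I anticipate is not the duality identity, which is a one-line calculation, but the well-definedness and regularity of $\psi$. One must ensure that ``evaluation function'' refers to evaluation at a genuine point of $\Omega$ (so that $\psi(\Omega)\subseteq\Omega$) and that point separation by $B(\Omega)$ makes $\psi$ single-valued; once these are secured, holomorphicity of $\psi$ follows for free from testing against the coordinate polynomials.
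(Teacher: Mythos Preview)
Your proof is correct and follows essentially the same route as the paper: both arguments use the duality identity $(Tf)(z)=K_{z}(Tf)=(T^{*}K_{z})(f)=K_{\psi(z)}(f)=f(\psi(z))$ together with the coordinate functions $f_{i}(z)=z_{i}$ to identify the symbol and verify its holomorphicity. The only cosmetic difference is that the paper first defines the components of the symbol via $\varphi_{T,i}=Tf_{i}$ (which makes well-definedness automatic) and then checks $T=C_{\varphi_{T}}$, whereas you first secure uniqueness of the evaluation point via polynomial separation and then test against coordinates; the content is the same.
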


\begin{proof}
	Obviously, we need only to prove the sufficiency. Assume for any $z\in \Omega ,$  $T^{*}K_{z}$ is  an  evaluation function on $B(\Omega).$ Thus there is a $z_{T}\in \Omega $ such that
	$T^{*}K_{z}=K_{z_{T}}.$ Let $f_{i}(z)=z_{i}$ be the coordinate function on $\Omega ,$ then
	$$
	T^{*}K_{z}(f_{i})=K_{z_{T}}(f_{i})=f_{i}(z_{T})=z_{T,i},
	$$
	where $z_{T,i}$ is the $i$-th coordinate component of $z_{T}.$
	In addition,
	$$
	T^{*}K_{z}(f_{i})=K_{z}(Tf_{i})=Tf_{i}(z).
	$$
	Hence $Tf_{i}(z)=f_{i}(z_{T})=z_{T,i}.$ Set
	$$
	\varphi_{T}(z)=(\varphi_{T,1}(z),\ldots ,\varphi_{T,n}(z))=z_{T}=(z_{T,1},\ldots ,z_{T,n}),
	$$
	then
	$$
	\varphi_{T,i}(z)=z_{T,i}=f_{i}(z_{T})=Tf_{i}(z).
	$$
	This shows that $\varphi_{T,i}\in B(\Omega), $ and $\varphi_{T}$ is a holomorphic self-map on  $\Omega .$ For any $f\in  B(\Omega),$ we have
	\begin{eqnarray*}
		C_{\varphi_{T}}(f)(z)&=&f(\varphi_{T}(z))=f(z_{T})\\
		&=&K_{z_{T}}(f)=T^{*}K_{z}(f)\\
		&=&K_{z}(Tf)=Tf(z).
	\end{eqnarray*}
	This means that $T=C_{\varphi_{T}}.$
\end{proof}


\medskip

\medskip

\section{Proof of Theorem~\ref{th1.2}}
\setcounter{equation}{0}
To prove Theorem~\ref{th1.2}, we need   the following result.

\begin{lem}\label{le4.1}
Let	$B(\Omega)$ be the Banach space  of holomorphic functions on  $\Omega$. Assume that  the evaluation function $K_{z}$ is bounded on $B(\Omega ) $   for any $z\in \Omega.$
 Let $\varphi=(\varphi_{1},\ldots ,\varphi_{n}):\Omega\to\Omega$ be a holomorphic self-map. If $C_\varphi: B(\Omega)\to B(\Omega)$ is Fredholm, then $\varphi$ must be a univalent mapping.
\end{lem}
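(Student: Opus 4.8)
The plan is to prove the contrapositive: if $\varphi$ is not univalent, then $C_\varphi$ is not Fredholm, by showing that $\ker(C_\varphi^*)$ is infinite-dimensional. The engine is the identity $C_\varphi^* K_z = K_{\varphi(z)}$ from Proposition~\ref{le3.1}: whenever two distinct points $p\neq q$ satisfy $\varphi(p)=\varphi(q)$, the functional $K_p-K_q$ is a nonzero element of $\ker(C_\varphi^*)$. Thus it suffices to exhibit infinitely many such ``collision pairs'' whose associated differences are linearly independent in $(B(\Omega))^*$. Linear independence will come essentially for free: since $B(\Omega)$ contains the polynomials, the evaluation functionals at finitely many distinct points are linearly independent (by Lagrange interpolation, equivalently by Assumption {\bf(c)}), so any finite family $\{K_{p_i}-K_{q_i}\}$ built from distinct points is linearly independent.

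First I would dispose of the degenerate case. If some fiber $\varphi^{-1}(w_0)$ is infinite --- in particular if $\varphi$ is constant, or more generally if $\det D\varphi$ vanishes identically so that generic fibers are positive-dimensional --- then I pick countably many distinct points $p_1,p_2,\ldots$ in that fiber. Since $\varphi(p_1)=\varphi(p_j)$ for all $j$, the differences $\{K_{p_1}-K_{p_j}\}_{j\geq 2}$ lie in $\ker(C_\varphi^*)$ and are linearly independent, whence $\dim\ker(C_\varphi^*)=\infty$ and $C_\varphi$ is not Fredholm.

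It remains to treat the case in which every fiber of $\varphi$ is discrete. Here I would invoke the fact that an equidimensional holomorphic map with discrete fibers is an open mapping (Remmert's open mapping theorem; in one variable this is simply the classical open mapping theorem for nonconstant holomorphic functions). Assuming $\varphi$ is not univalent, choose $a\neq b$ with $\varphi(a)=\varphi(b)=w_0$ together with disjoint neighborhoods $U_a\ni a$ and $U_b\ni b$ in $\Omega$. By openness, $\varphi(U_a)$ and $\varphi(U_b)$ are open sets both containing $w_0$, so their intersection $W$ is a nonempty open set. Choosing a sequence of distinct values $w_j\in W$ and preimages $a_j\in U_a$, $b_j\in U_b$ with $\varphi(a_j)=\varphi(b_j)=w_j$, I obtain infinitely many collision pairs; because the $w_j$ are distinct and $U_a,U_b$ are disjoint, the whole collection $\{a_j,b_j\}_j$ consists of distinct points, so $\{K_{a_j}-K_{b_j}\}_j\subset\ker(C_\varphi^*)$ is an infinite linearly independent set. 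This again forces $\dim\ker(C_\varphi^*)=\infty$, contradicting Fredholmness.

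The main obstacle is the several-variable openness step: unlike in $\mathbb{C}$, a nonconstant holomorphic self-map need not be open, and one must separate the genuinely degenerate (positive-dimensional fiber) case from the finite-fiber case where the open mapping theorem applies. Care is also needed because $a$ or $b$ may be branch points of $\varphi$, where $\det D\varphi$ vanishes; but openness of branched coverings still guarantees that $w_0$ is an interior point of both $\varphi(U_a)$ and $\varphi(U_b)$, so the construction of the linearly independent sequence goes through.
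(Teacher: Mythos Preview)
Your argument is correct and shares the paper's core mechanism: use $C_\varphi^* K_z = K_{\varphi(z)}$ so that each collision pair $(p,q)$ with $p\neq q$, $\varphi(p)=\varphi(q)$ yields $K_p-K_q\in\ker(C_\varphi^*)$, then manufacture infinitely many such pairs with pairwise distinct points and invoke Assumption~\textbf{(c)} for linear independence.

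The difference lies in how openness of $\varphi$ is obtained. You split off the infinite-fiber case directly, and in the discrete-fiber case appeal to Remmert's open mapping theorem. The paper instead extracts openness from the Fredholm hypothesis itself: Fredholmness forces the total set of collision points to be finite (otherwise one already has $\dim\ker(C_\varphi^*)=\infty$), so away from that finite set $\varphi$ is injective, and the standard fact that an injective holomorphic map between equidimensional domains is a biholomorphism onto its image gives local openness; from there the paper runs exactly the same collision-pair construction in $\varphi(U_1)\cap\varphi(U_2)$ that you describe. Your route is cleaner---it explicitly isolates the degenerate case and sidesteps the paper's somewhat casual ``without loss of generality'' reduction to non-collision base points---at the cost of citing a heavier several-variable theorem; the paper's route stays closer to first principles but has a more roundabout, bootstrap flavor (first use Fredholmness to bound the collisions, then use that to get openness, then use openness to show a single collision would produce infinitely many).
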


\begin{proof}
We first  prove that $\varphi $ must be an open map on $\Omega.$ Assume $U$ is an any open subset of $\Omega$, we need to prove that $\varphi(U)$ is also an open subset of   $\Omega ,$ which is equivalent to show that  $\varphi(z_{0})$ is the interior point of   $\varphi(U)$ for any  $z_{0}\in U.$   Since $C_{\varphi }$ is Fredholm, there are at most finite points $\{z_{i}\}_{i=1}^{m}, \{w_{i}\}_{i=1}^{m}$ such that
$\varphi(z_{i})=\varphi(w_{i}), i=1,\ldots ,m.$ Without loss of generality, assume $z_{0}\neq z_{i}, $ and $z_{0}\neq w_{i}.$ Write $\epsilon =dist(z_{0}, \{z_{i},w_{i}\}_{i=1}^{m}),$ then
$\varphi $ is univalent on $U_{\epsilon }=\{z| |z-z_{0}|<\frac{\epsilon}{2} \},$ further is univalent on $U\cap  U_{\epsilon }.$ Hence  $\varphi(U\cap  U_{\epsilon })$ is an open subset of ${\B}_n$. This shows that $ \varphi (z_{0})$ is the interior point of  $\varphi(U).$ That is,  $\varphi $ is an open map.

Next,   let us show  that  $\varphi $ is univalent on $\Omega.$  Otherwise,  there are two points  $z_{1}, z_{2}\in \Omega$ such that $ \varphi(z_{1})=\varphi(z_{2}).$  Thus there are two open subsets  $U_{1},$  $U_{2}$  of $\Omega$ which satisfies $z_{1}\in U_{1}, $ $z_{2}\in U_{2}$ and $U_{1}\cap U_{2}=\emptyset  $ such that  $\varphi(U_{1})\cap \varphi(U_{2})$ is a non-empty open subset of $\Omega$ since $\varphi$ is an open map.  Choose a sequence $\{w_{k}\}_{k=1}^{\infty }\subset \varphi(U_{1})\cap \varphi(U_{2})$
such that $w_{k}\neq w_{l}$  for $k\neq l,$ then there are $z_{k}\in U_{1}, \tilde{z}_{k}\in U_{2}$ such that
$$
\varphi(z_{k})=\varphi(\tilde{z}_{k})=w_{k}.
$$
Moreover,  $C_{\varphi }^{*}K_{z_{k}}=C_{\varphi }^{*}K_{\tilde{z}_{k}},$ that is  $C_{\varphi }^{*}(K_{z_{k}}-K_{\tilde{z}_{k}})=0.$ This contradicts to the Fredholmness of $C_{\varphi }$ by the independence of
$\{K_{z_{k}}-K_{\tilde{z}_{k}}\}.$ Hence $\varphi $ must be univalent. This proves Lemma~\ref{le4.1}.
\end{proof}

Now we use Lemma 4.1 to prove  Theorem~\ref{th1.2}.

\begin{proof}[Proof of  Theorem~\ref{th1.2}]
First we prove an equivalence between $(ii)$ and $(iii)
$. To do this,  it is enough to show that $(ii)\Rightarrow (iii).$ Assume $C_{\varphi }$ is invertible on $B(\Omega ),$ and $S$ is the inverse of $C_{\varphi }.$ That is, $SC_{\varphi }=C_{\varphi }S=I_{B(\Omega )},$ the identity element on $B(\Omega ).$ By Lemma~\ref{le4.1}, $\varphi $ is a univalent mapping, it is easy to see that $C_{\varphi }C_{\varphi^{-1} }=I_{B(\Omega )}=C_{\varphi }S.$ Hence $C_{\varphi^{-1} }=S$, thus $C_{\varphi^{-1} }$ is bounded on $B(\Omega ).$ Let $f_{i}(z)=z_{i},$ then $\varphi_{i}^{-1}(z)=C_{\varphi^{-1} }(f_{i}(z))\in B(\Omega ),$ where $\varphi_{i}^{-1}$ is the $i$th component of $\varphi^{-1}.$ This shows that $\varphi^{-1}$ is well-defined on $\Omega ,$ further $\varphi \in \aut .$

 Now let us prove an equivalence between $(i)$ and $(iii)$.
It suffices to prove  that $(i) \Rightarrow (iii).$
If $C_\varphi$ is a Fredholm operator on $B(\Omega)$, then $\varphi $ is univalent by Lemma~\ref{le4.1}. To see $\varphi\in\aut ,$ assume the contrary. Suppose  $ \Omega-\varphi(\Omega)\neq \phi .$ Then there are two possible  cases . \\
\emph{Case 1.} As shown in figure 1,  $\partial \Omega\subsetneqq \partial \varphi (\Omega)$ and $\partial \varphi (\Omega)-\partial \Omega$ contains a slice $L$ from some boundary points of $\Omega$
 into the inner of $\Omega$.

 As $\partial \varphi(\Omega)$ is a closed sub-manifold of $\overline{\Omega}$, $L$ must be an overlapping slice of two slice with opposite directions, we see that the pre-image of $L$ under $\varphi$ has two non-intersecting slices in $\partial\Omega$. Let $l_{1}$ and $l_{2}$ be two slices such that $\varphi(l_{1})=\varphi(l_{2})=L$ as a subset of $\partial \Omega$ but $\varphi(l_{1})$ and $\varphi(l_{2})$ have opposite directions in $\partial\varphi(\Omega).$
 \begin{figure}[h]
\centering
\includegraphics[height=50pt,width=120pt]{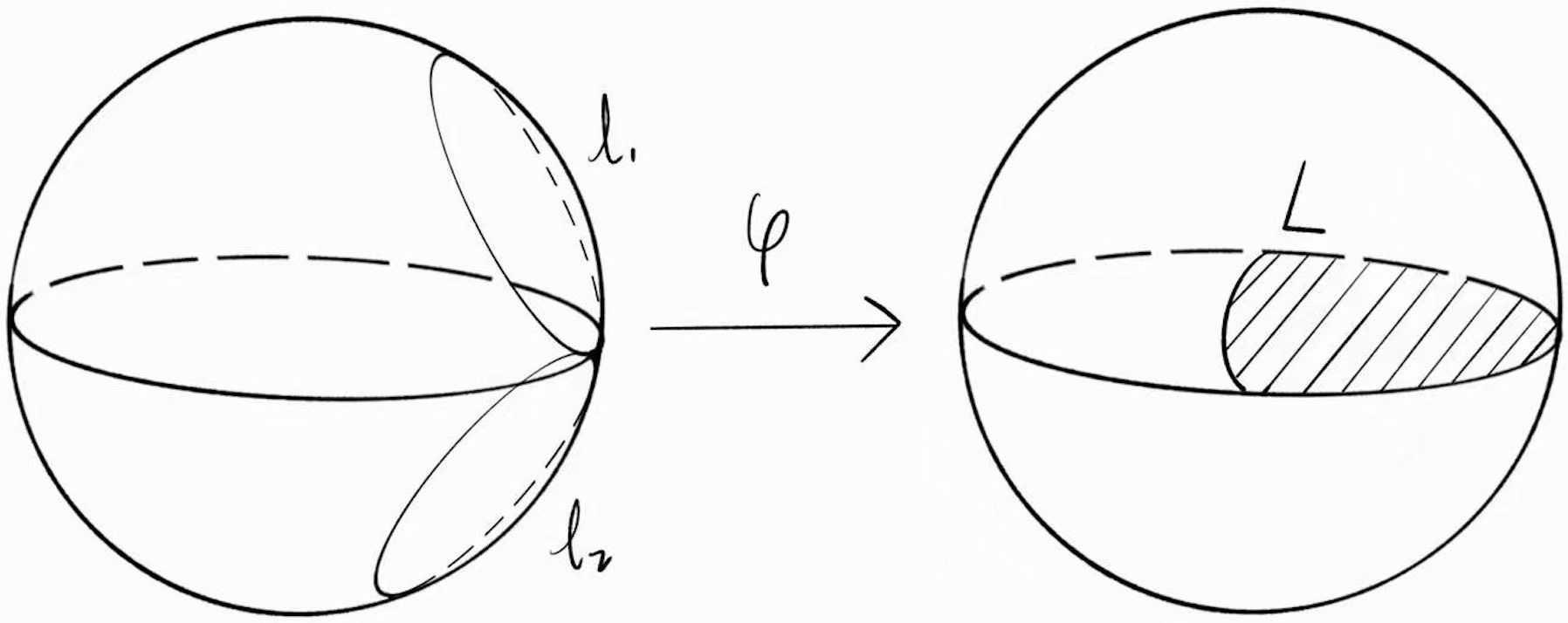}
\caption{}
\end{figure}\\
\emph{Case 2.} As shown in figure 2 and figure 3, there is a $z_{0}\in \Omega$ and an open neighborhood $U(z_{0})$ such that $U(z_{0})\cap \overline{\varphi(\Omega)}=\phi .$
\begin{figure}[h]
\centering
\includegraphics[height=50pt,width=120pt]{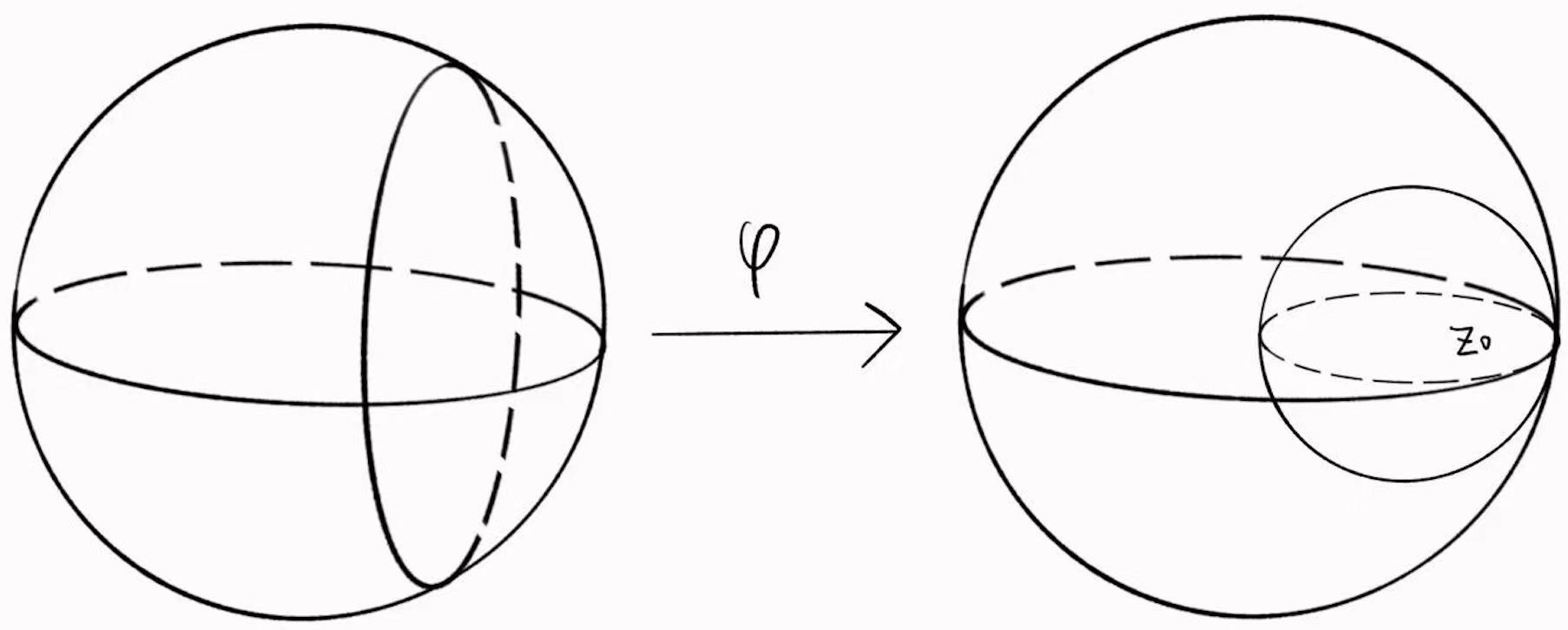}
\caption{}
\end{figure}\\
\begin{figure}[h]
\centering
\includegraphics[height=50pt,width=120pt]{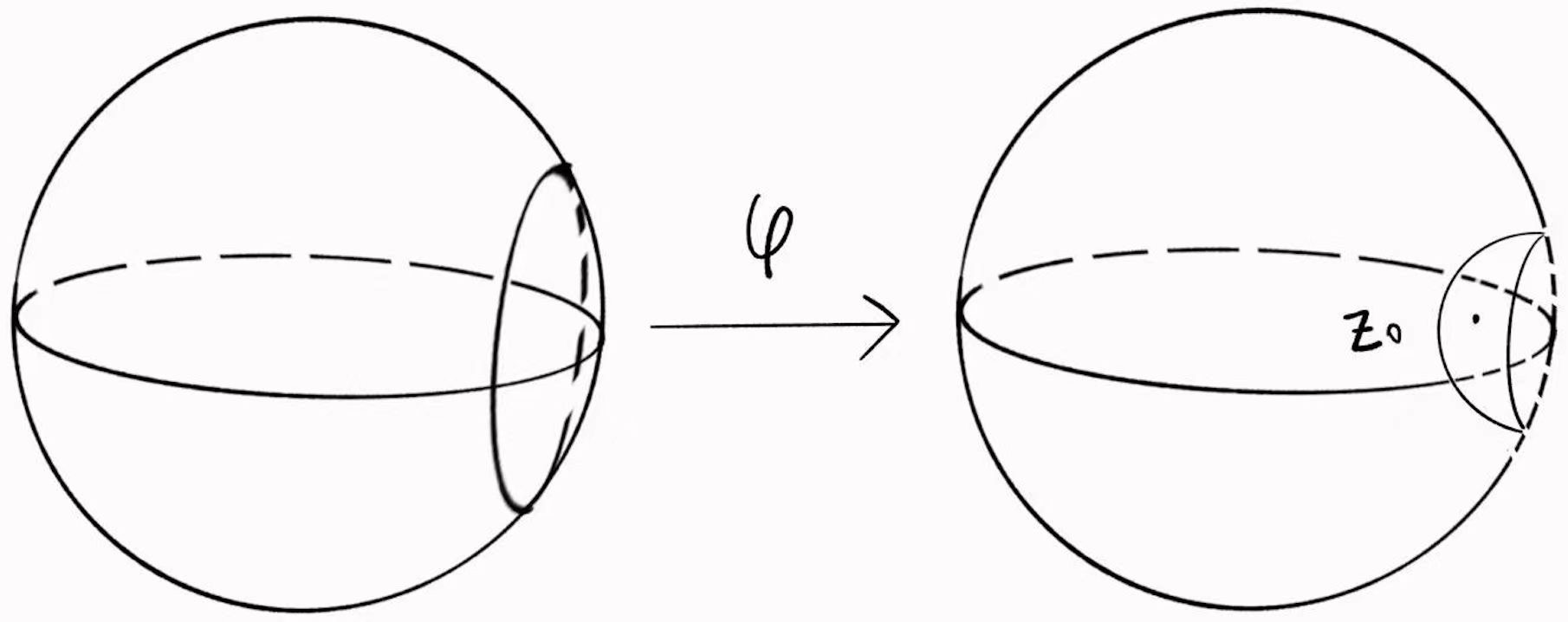}
\caption{}
\end{figure}\\

 Since $C_{\varphi }$ is Fredholm, we see that
$R(C_{\varphi }),$ the range of $C_{\varphi },$ is closed, and
$$
\dim [B(\Omega )/R(C_{\varphi })]=\dim\ker C_{\varphi}^{*}<\infty .
$$
Let $M$ be the subspace of $B(\Omega )$ such that $B(\Omega )=R(C_{\varphi })+M,$ and $\{h_{i}\}_{i=1}^{m}$ is the base of $M$. Then for any $f\in B(\Omega ),$ there are $\alpha_{i}\in \mathbb{C} ,i=1,\ldots, m,$ and $g\in B(\Omega )$ such that
$$
f =C_{\varphi }g+\sum_{i=1}^{m}\alpha_{i}h_{i}.
$$
In particular, for any $k\in \mathbb{N},$ there are $f_{i,k}\in B(\Omega )$ and $\alpha_{i,j}^{k}\in \mathbb{C}$ such that
$$
z_{i}^{k}=C_{\varphi }f_{i,k}+\sum_{j=1}^{m}\alpha_{i,j}^{k}h_{j},\hskip 5mm i=1,2,\ldots, n,
$$
thus
$$
(\varphi_{i}^{-1})^{k}=f_{i,k}|_{\varphi (\Omega )}+\sum_{j=1}^{m}\alpha_{i,j}^{k}C_{\varphi^{-1}}h_{j}, \hskip 5mm i=1,2,\ldots, n,
$$
 where $\varphi_{i}^{-1}$ is the $i$th component of $\varphi^{-1}.$ If $\varphi ^{-1}$ can not be analytically extended to  $\Omega .$ Set
$$
B=\bigvee\bigg\{B(\Omega )|_{\varphi(\Omega )},\{(\varphi_{i}^{-1})^{k}\}_{k=1}^{\infty },i=1,\ldots,n\bigg\},
$$
here $B$ is the  vector space spanned by  $B(\Omega )|_{\varphi(\Omega )}$ and $\{(\varphi_{i}^{-1})^{k}\}_{k=1}^{\infty }, i=1,2,\ldots ,n.$ Note  $\varphi^{-1}$ is defined on  $\varphi(\Omega ),$ if $\varphi^{-1}$ is not defined on $\Omega ,$  then
$$
\dim(B/B(\Omega )|_{\varphi(\Omega )})=\dim \bigvee_{k=1}^{\infty }\bigg\{{(\varphi_{i}^{-1})^{k}},i=1,2,\ldots,n\bigg\}=\infty .
$$
However,
$$
\dim(B/B(\Omega )|_{\varphi(\Omega )})<\infty
$$
by
$$
(\varphi_{i}^{-1})^{k}=f_{i,k}|_{\varphi (\Omega )}+\sum_{j=1}^{m}\alpha_{i,j}^{k}C_{\varphi^{-1}}h_{j},
$$
This contradiction shows that  $\varphi ^{-1}$ may be analytically extended to  $\Omega .$  Write $\widetilde{\varphi^{-1}}$ is the extension of $\varphi^{-1}.$ We are to prove that $\widetilde{\varphi^{-1}}(\Omega )= \Omega .$ Assume the contrary, that is, $ \Omega \subsetneqq \widetilde{\varphi^{-1}}(\Omega ).$
 Let $\Omega_{1}=\widetilde{\varphi^{-1}}(\Omega ).$
 By
$$
z_{i}^{k}=C_{\varphi }f_{i,k}+\sum_{j=1}^{m}\alpha_{i,j}^{k}h_{j},\hskip 5mm i=1,2,\ldots, n.
$$
we have
$$
(\widetilde{\varphi_{i}^{-1}})^{k}=f_{i,k}\circ \varphi \circ \widetilde{\varphi^{-1}}+\sum_{j=1}^{m}\alpha_{i,j}^{k}h_{j}\circ \widetilde{\varphi^{-1}},\hskip 5mm i=1,2,\ldots, n.
$$
Thus
$$
f_{i,k}\circ \varphi \circ \widetilde{\varphi^{-1}}=(\widetilde{\varphi_{i}^{-1}})^{k}-\sum_{j=1}^{m}\alpha_{i,j}^{k}h_{j}\circ \widetilde{\varphi^{-1}},\hskip 5mm i=1,2,\ldots, n.
$$
If $\varphi $ is undefined on $\Omega_{1}-\Omega ,$ then $f_{i,k}\circ \varphi \circ \widetilde{\varphi^{-1}}$ is undefined on $\Omega_{1}-\Omega .$
Set
$$
B=\bigvee_{k=1}^{\infty }\bigg\{f_{i,k}\circ \varphi \circ \widetilde{\varphi^{-1}},(\widetilde{\varphi_{i}^{-1}})^{k},i=1,\ldots,n\bigg\},
$$
$$
B_{0}=\bigvee_{k=1}^{\infty }\bigg\{(\widetilde{\varphi_{i}^{-1}})^{k},i=1,\ldots,n\bigg\},
$$
here $B$ is the  vector space spanned by  $\{f_{i,k}\circ \varphi \circ \widetilde{\varphi^{-1}},i=1,\ldots,n\}_{k=1}^{\infty }$ and $\{(\widetilde{\varphi_{i}^{-1}})^{k}, i=1,2,\ldots ,n\}_{k=1}^{\infty }$ and $B_{0}$  is the  vector space spanned by  $\{(\widetilde{\varphi_{i}^{-1}})^{k}, i=1,2,\ldots ,n\}_{k=1}^{\infty }.$ It is clear that
$$
\dim(B/B_{0})=\dim \bigvee_{k=1}^{\infty }\bigg\{f_{i,k}\circ \varphi \circ \widetilde{\varphi^{-1}},i=1,\ldots,n\bigg\}=\infty
$$
since  $ f_{i,k}\circ \varphi \circ \widetilde{\varphi^{-1}}$ is undefined on $\Omega_{1} -\Omega.$ However,
$$
\dim(B/B_{0})<\infty
$$
by
$$
f_{i,k}\circ \varphi \circ \widetilde{\varphi^{-1}}=(\widetilde{\varphi_{i}^{-1}})^{k}-\sum_{j=1}^{m}\alpha_{i,j}^{k}h_{j}\circ \widetilde{\varphi^{-1}},\hskip 5mm i=1,2,\ldots, n.
$$
This contradiction shows that $\varphi $ may be analytically extended to $\Omega_{1} .$ Let $\widetilde{\varphi }$ be the extension of $\varphi .$ Then
$$
\widetilde{\varphi }\circ \widetilde{\varphi^{-1}}|_{\varphi (\Omega )}=I_{\varphi (\Omega )},
$$
the identity on $\varphi(\Omega ).$ This shows that $ \widetilde{\varphi }\circ \widetilde{\varphi^{-1}}=I_{\Omega }$ by  the uniqueness of analytic extension. Thus $\widetilde{\varphi }$ is univalent on $\Omega_{1}$ and $ \widetilde{\varphi }(\Omega_{1})=\Omega .$  On the other hand,
$$
\widetilde{\varphi^{-1}}\circ \widetilde{\varphi}|_{\Omega }=\widetilde{\varphi^{-1}}\circ \varphi |_{\Omega }=I_{\Omega },
$$
the  identity on $\Omega .$ Hence $\widetilde{\varphi^{-1}}$ is univalent  on $\Omega .$ This shows that it  must be $\varphi (\Omega )=\Omega .$ In fact, if $\varphi(\Omega) \subsetneqq \Omega,$ then $\Omega \subsetneqq \Omega_{1}.$ Choose $h\in B(\Omega )$ such that $h$ can not be extended to $\Omega_{1},$ then for any $k\in \mathbb{N},$ there is an $f_{i,k}\in B(\Omega )$ such that
$$
z_{i}^{k}h=C_{\varphi }f_{i,k}+\sum_{j=1}^{m}\alpha_{i,j}^{k}h_{j},\hskip 5mm i=1,2,\ldots, n.
$$
Thus
$$
(\widetilde{\varphi_{i}^{-1}})^{k}h\circ\widetilde{\varphi^{-1}}=C_{\varphi }f_{i,k}\circ\widetilde{\varphi^{-1}}+\sum_{j=1}^{m}\alpha_{i,j}^{k}h_{j}\circ\widetilde{\varphi^{-1}},\hskip 5mm i=1,2,\ldots, n.
$$
Note $(\widetilde{\varphi_{i}^{-1}})^{k}h\circ\widetilde{\varphi^{-1}}$ is undefined on $\Omega -\varphi(\Omega)$ since $h$  can not be extended to $\Omega_{1}-\Omega.$ But $C_{\varphi }f_{i,k}\circ\widetilde{\varphi^{-1}}=f_{i,k}\circ \varphi \circ\widetilde{\varphi^{-1}}$ is well-defined on $\Omega $ by $\widetilde{\varphi }(\Omega_{1})=\Omega .$ Similar to the  discussion above, set
$$
B=\bigvee_{k=1}^{\infty }\bigg\{f_{i,k}\circ \varphi \circ \widetilde{\varphi^{-1}},(\widetilde{\varphi_{i}^{-1}})^{k}h\circ\widetilde{\varphi^{-1}} ,i=1,\ldots,n\bigg\},
$$
$$
B_{0}=\bigvee_{k=1}^{\infty }\bigg\{f_{i,k}\circ \varphi \circ \widetilde{\varphi^{-1}},i=1,\ldots,n\bigg\}.
$$
Obviously,
$$
\dim(B/B_{0})=\dim \bigvee_{k=1}^{\infty }\bigg\{(\widetilde{\varphi_{i}^{-1}})^{k}h\circ\widetilde{\varphi^{-1}} ,i=1,\ldots,n\bigg\}=\infty .
$$
However,
$$
\dim(B/B_{0})<\infty
$$
by
$$
(\widetilde{\varphi_{i}^{-1}})^{k}h\circ\widetilde{\varphi^{-1}}=C_{\varphi }f_{i,k}\circ\widetilde{\varphi^{-1}}+\sum_{j=1}^{m}\alpha_{i,j}^{k}h_{j}\circ\widetilde{\varphi^{-1}},\hskip 5mm i=1,2,\ldots, n.
$$
This contradiction shows that $\varphi (\Omega )=\Omega .$
Further $\varphi \in \aut .$
The proof of Theorem~\ref{th1.2} is complete.
\end{proof}

\begin{rem} \label{rem4.2} It is clear to see that
	
(i) Assumption {\bf (b)}   is indispensable  for Theorem~\ref{th1.2} since  the Banach space of holomorphic functions may not be invariant under automorphisms.

(ii)   Assumption {\bf(c)} is necessary for Theorem~\ref{th1.2} since there exists a  space such that it does not hold. For example, suppose $B(z)=\Pi_{i=1}^{\infty}\frac{\overline{z_{i}}}{|z_{i}|}\frac{z-z_{i}}{1-\overline{z_{i}}z}$ is the infinite Blaschke product on the unit disk $\mathbb{D}.$ Let
$$
H=\bigvee_{n=1}^{\infty }\{B^{n}\}
$$
with respect to the norm in $H^{\infty }.$ Obviously, for any $z\in  \mathbb{D},$ the evaluation function $K_{z}(f)=f(z)$ on $H$   is bounded.  For any zero point $z_{i}$ of $B(z),$ it is clear that $K_{z_{i}}=0$ on $H.$ They are, of course, linearly dependent.
As a consequence of Assumption {\bf(c)},    we have $K_w\not=0$ for every $w\in\Omega$.

\end{rem}

The following corollary  follows from Theorem~\ref{th1.2}.

\begin{cor}\label{cor4.2}
 Suppose $\Omega =\{z\in \mathbb{C}:0<|z|<1\}$ and let $H$  denote the Hilbert space of holomorphic functions on $\Omega $  with orthonormal basis $e_n =z^n, n = -1,0,,1, 2, 3,\cdots  .$ For any $f(z)=\sum_{n=-1}^{\infty }a_{n}z^{n}, g(z)=\sum_{n=-1}^{\infty }b_{n}z^{n}\in H,$ the inner product of $f$ and $g$ is defined as
$$\langle f, g\rangle =\sum_{n=-1}^{\infty }a_{n}\cdot \overline{b_{n}}.$$  Assume $\varphi : \Omega \rightarrow \Omega $ is a holomorphic self-map such that  $C_\varphi$ is  bounded on $H.$ Then  the following conditions are equivalent:
\begin{enumerate}
\item[(i)] $C_\varphi$ is a Fredholm operator on $H$.
\item[(ii)] $\varphi\in\aut$.
\end{enumerate}
\end{cor}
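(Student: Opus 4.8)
The plan is to realize the corollary as a direct instance of Theorem~\ref{th1.2}, applied to $\Omega=\{z\in\mathbb{C}:0<|z|<1\}$ and $B(\Omega)=H$, by checking all of its hypotheses: Assumptions {\bf(a)}, {\bf(b)}, {\bf(c)}, together with the two extra conditions that $M_z\in\mathcal{M}(H)$ and that no enlargement of $\Omega$ receives all of $H$ by analytic continuation. Since $\Omega$ is bounded and connected and $H$ contains every monomial $z^n$ ($n\ge -1$), hence the ring of polynomials, the basic framework is in place. For Assumption {\bf(a)} I would expand $f=\sum_{n\ge -1}a_nz^n$ and estimate, via Cauchy--Schwarz, $|f(w)|\le\|f\|\bigl(|w|^{-2}+(1-|w|^2)^{-1}\bigr)^{1/2}$, which is finite for each $w\in\Omega$; equivalently the reproducing kernel is $K(z,w)=(z\overline{w})^{-1}+(1-z\overline{w})^{-1}$. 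It is precisely the normalized version of this kernel that fails to tend weakly to $0$ as $w\to 0$, which is the Manhas--Zhao obstruction to the reproducing-kernel approach and the reason we route through Theorem~\ref{th1.2} instead.

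Next I would verify {\bf(b)} and {\bf(c)}. For {\bf(b)} the key geometric fact is that every automorphism of the punctured disk is bounded on $\Omega$, hence extends across the puncture by Riemann's theorem to an automorphism of $\mathbb{D}$ fixing $0$; thus $\aut$ consists exactly of the rotations $\varphi_\lambda(z)=\lambda z$ with $|\lambda|=1$. For such $\varphi_\lambda$ one has $C_{\varphi_\lambda}f=\sum_{n\ge -1}a_n\lambda^n z^n$, which only rotates the coefficients, so $C_{\varphi_\lambda}$ is unitary, in particular bounded, with inverse $C_{\varphi_{\bar\lambda}}=C_{\varphi_\lambda^{-1}}$; this gives {\bf(b)}. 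For {\bf(c)}, if $\sum_{k=1}^m c_kK_{a_k}=0$ at distinct $a_k\in\Omega$, then testing against the monomials $z^0,\dots,z^{m-1}\in H$ yields a Vandermonde system $\sum_k c_k a_k^{\,j}=0$ for $0\le j\le m-1$, whose nonsingularity (the $a_k$ being distinct) forces $c_k=0$.

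The substantive steps are the two remaining conditions. That $M_z\in\mathcal{M}(H)$ is immediate: $zf=\sum_{n\ge -1}a_nz^{n+1}$ carries only nonnegative powers and $\|zf\|=\|f\|$, so $M_z$ is an isometric multiplier. The main obstacle is the non-extendability condition. Given a domain $\Omega_1\supsetneqq\Omega$, the excess satisfies $\Omega_1\setminus\Omega\subseteq\{0\}\cup\{|z|\ge 1\}$, and I would argue by two exhaustive cases. If $0\in\Omega_1$, then $h(z)=z^{-1}=e_{-1}\in H$ has a pole at $0$ and admits no continuation there. Otherwise $\Omega_1$ contains some point $\zeta$ with $|\zeta|\ge 1$; since $\Omega_1$ is open, connected, and already contains $\Omega$, a path in $\Omega_1$ from an interior point to $\zeta$ must meet $\partial\mathbb{D}$, and openness then places a full neighborhood of some $p\in\partial\mathbb{D}$ inside $\Omega_1$. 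I would take the lacunary series $h(z)=\sum_{j\ge 1} j^{-1}z^{2^j}$, whose coefficients are $\ell^2$ so that $h\in H^2(\mathbb{D})\subset H$, yet whose exponents satisfy the Hadamard gap condition, making $\partial\mathbb{D}$ a natural boundary; such $h$ cannot be continued across $p$, hence not to $\Omega_1$. The delicate point here is to produce an obstruction function that simultaneously lies in $H$ (the square-summability of coefficients) and genuinely fails to extend, while handling uniformly the two topologically distinct ways $\Omega_1$ can exceed the punctured disk.

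With {\bf(a)}, {\bf(b)}, {\bf(c)}, the multiplier condition, and non-extendability all confirmed, Theorem~\ref{th1.2} applies to $H$ and yields the equivalence of Fredholmness, invertibility, and membership in $\aut$ for $C_\varphi$. In particular, conditions (i) and (ii) of the corollary are exactly statements (i) and (iii) of Theorem~\ref{th1.2}, so they are equivalent, which completes the proof.
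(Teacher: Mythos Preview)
Your proposal is correct and follows exactly the paper's approach: the corollary is obtained as a direct application of Theorem~\ref{th1.2}, and the paper itself merely asserts this, supplying in Remark~\ref{re4.3} only the reproducing kernel formula and the identification of $\aut$ with rotations. Your verification is in fact more thorough than the paper's, since you explicitly check Assumption~{\bf(c)}, the multiplier property of $M_z$, and---most substantively---the non-extendability hypothesis via $z^{-1}$ (for $0\in\Omega_1$) and a Hadamard-gap series (for $\Omega_1$ meeting $\partial\mathbb{D}$), points the paper leaves implicit.
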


\begin{rem}\label{re4.3}
The methods in \cite{CHZ2} and \cite{MZ} are not applicable to this space, but it is not difficult to  calculate that the reproducing kernal of $H$ is
$$
K(z,w)=\frac{1}{\bar{z}w}+\frac{1}{1-w\bar{z}}.
$$
For any $z\in \Omega,$ $K_{z}(w)=K(z,w)\in H.$ Therefore, it satisfies the conditions of the Theorem 1.2. The above example  tells us that the boundary behavior of the reproducing kernel on Hilbert of  holomorphic functions is perhaps complex. The norm of the reproducing kernel
 $K(z,w)$ may be infinite or bounded when $z$ approaches to the boundary. It may also be infinite in some directions and finite in some other  directions. However,  no matter what the shape of the domain is, and no matter how complex the boundary behavior of the reproducing kernel or the evaluation function is, as long as the space contains the polynomial ring, similar conclusions are always true.

 The automorphic group of  $\Omega =\{z\in \mathbb{C}:0<|z|<1\}$ is perhaps well-known, but we didn't find the relevant  proof. For the completeness,  we give a proof here. If $\varphi \in \aut ,$ then $\varphi $ is a bounded holomorphic function on $\Omega .$ Let
 $$
 \varphi (z)=\sum_{n=1}^{\infty }b_{n}z^{-n}+\sum_{n=0}^{\infty }a_{n}z^{n}
 $$
 be the Laurent series of $\varphi .$
 Since $sup_{z\in \Omega }|\varphi (z)|\leq 1,$ it is routine  to check that $b_{n}=0, n=1,2,\cdots .$ Thus  $\varphi =\sum_{n=0}^{\infty }a_{n}z^{n}$ is well-defined on $\D,$ and it must be  bounded holomorphic  on $\D.$ Moreover, we claim that $a_{0}=0.$ Otherwise, if
  $a_{0}=\varphi(0)\neq 0,$ then  $|a_{0}|\leq 1$ since $|\varphi(z)|\leq 1.$ Note $\varphi$ is one-to-one, and $\varphi (\Omega )=\Omega,$ we see that $\varphi (z)\neq 0$ for any $z\in \Omega .$ If $0<|a_{0}|<1,$ then there is a $z_{0}\in \Omega $ such that $\varphi (z_{0})=a_{0}=\varphi (0), $ thus there is a sequence $\{w_{k}\}$ in a neighborhood $U$ of $0$ and  another sequence $\{z_{k}\}$ in a neighborhood $V$ of $z_{0}$ such that $\varphi(z_{k})=\varphi(w_{k}).$ This contradicts to the univalence of $\varphi .$ Hence $a_{0}=0$ or $|a_{0}|=1.$ However, $\varphi $ is holomorphic on $\D$ and $|\varphi (z)|\leq 1,$ it must be $a_{0}=0$ by maximum modulus principle.  This shows that $\varphi $ is an automorphic of $\D$ which satisfies $\varphi(0)=0.$ Thus there is a $\lambda \in \mathbb{T}$ such that $\varphi (z)=\lambda z.$  Furthermore, we know that  $\varphi : \Omega \rightarrow \Omega $ induces a Fredholm composition operator on $H$ if and only if $\varphi (z)=\lambda z$ for some $\lambda \in \mathbb{T}.$
\end{rem}

Note that if $\varphi $ is a proper holomorphic mapping on the domain $\Omega,$ then $\varphi $ must be a surjection. Therefore, a univalent proper holomorphic mapping must be automorphism. Then we can obtain another characterization for the Fredholm composition operator.
\begin{cor}
Let $\Omega$ and $B(\Omega)$ satisfy Assumptions {\bf(a)}, {\bf(b)} and {\bf(c)}. 
	Moreover, we assume that $M_{z_{i}}\in \mathcal{M}(B(\Omega ))(i=1,\ldots,n),$ the multiplier algebra on $B(\Omega )$,   and that for
any domain $\Omega_{1}$ with $\Omega \subsetneqq \Omega_{1},$ there is at least an $h\in B(\Omega )$  such that  $h$ can not be  analytically extended to  $\Omega_{1}.$ 
Let $\varphi:\Omega\to\Omega$ be a holomorphic self-map. Then $C_{\varphi }$ is a Fredholm operator if and only if $\varphi $ is a univalent proper holomorphic mapping.
\end{cor}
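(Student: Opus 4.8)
The plan is to deduce the statement directly from Theorem~\ref{th1.2}. The hypotheses imposed here on $\Omega$ and $B(\Omega)$ are precisely those of Theorem~\ref{th1.2}, so that theorem already supplies the equivalence that $C_{\varphi}$ is Fredholm if and only if $\varphi\in\aut$. Consequently the corollary will reduce entirely to the purely function-theoretic equivalence
$$
\varphi\in\aut \quad\Longleftrightarrow\quad \varphi \text{ is a univalent proper holomorphic self-map of }\Omega,
$$
which I would prove separately, with no further reference to the operator theory.

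For the forward implication I would argue as follows. Suppose $\varphi\in\aut$. Then $\varphi$ is one-to-one by the definition of an automorphism, hence univalent, and it possesses a holomorphic inverse $\psi=\varphi^{-1}:\Omega\to\Omega$. To check properness, I take an arbitrary compact $K\subset\Omega$ and note that $\varphi^{-1}(K)=\psi(K)$ is the continuous image of a compact set, hence compact; thus preimages of compacta are compact and $\varphi$ is proper.

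For the reverse implication, suppose $\varphi$ is univalent and proper. Univalence already gives injectivity, so the only thing left to establish is surjectivity, after which $\varphi$ is a holomorphic bijection of $\Omega$ onto itself, i.e. $\varphi\in\aut$. This is exactly the surjectivity of proper holomorphic self-maps noted before the statement: since $\varphi$ is a univalent holomorphic map it is a biholomorphism onto its image, hence an open map, so $\varphi(\Omega)$ is open; on the other hand a proper continuous map into the locally compact Hausdorff space $\Omega$ is closed, so $\varphi(\Omega)$ is closed in $\Omega$. Since $\Omega$ is connected and $\varphi(\Omega)\neq\emptyset$, I conclude $\varphi(\Omega)=\Omega$.

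I do not expect a serious obstacle: the entire weight of the corollary is carried by Theorem~\ref{th1.2}, and the remaining equivalence is elementary. The single point deserving a line of care is the assertion that a univalent holomorphic map is open in the several-variable setting; this rests on the classical fact that an injective holomorphic map $\Omega\to\cn$ has nonvanishing Jacobian and is therefore a biholomorphism onto its (open) image. Combined with the standard closedness of proper maps, the topological argument for surjectivity then closes immediately.
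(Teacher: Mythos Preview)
Your proposal is correct and follows essentially the same route as the paper: the paper simply remarks, immediately before stating the corollary, that a proper holomorphic self-map of $\Omega$ is necessarily surjective, so a univalent proper map is an automorphism, and then invokes Theorem~\ref{th1.2}. You have merely supplied the standard details (automorphisms are proper via $\varphi^{-1}(K)=\psi(K)$, and proper plus univalent gives open and closed image in the connected $\Omega$) that the paper leaves implicit.
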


 \section{Weighted composition operators on $B(\Omega )$}
 Let $\Omega $ denote a bounded domain in $\mathbb{C}^{n}, $ $ H(\Omega )$ be the space of  holomorphic functions on $\Omega$. For a holomorphic function $\psi $ and a holomorphic self-map $\varphi $ on $\Omega ,$ the weighted composition operator $W_{\psi, \varphi }$ on $H(\Omega )$ is defined as
$$
W_{\psi ,\varphi }f = M_{\psi }C_{\varphi }f =\psi \cdot (f \circ \varphi ) ,\hskip 3mm f\in  H(\Omega ),
$$
where $M_{\psi }$ is called the  multiplication operator with symbol $\psi $.
\begin{lem}\label{le5.1}
Let	$B(\Omega)$ be the Banach space  of holomorphic functions on  $\Omega$. Assume that  the evaluation function $K_{z}$ is bounded on $B(\Omega ) $   for any $z\in \Omega.$  Let $\psi \in B(\Omega)$ such that $M_{\psi }$ is bounded on $B(\Omega).$ Then for any $z\in \Omega ,$
$$
M_{\psi }^{*}K_{z}=\psi(z)K_{z}.
$$
\end{lem}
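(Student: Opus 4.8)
The plan is to prove the identity by the same elementary device used for composition operators in Proposition~\ref{le3.1}: since $M_\psi$ is bounded on $B(\Omega)$ by hypothesis, its Banach-space adjoint $M_\psi^*$ is a bounded operator on $(B(\Omega))^*$, and $K_z$ is a legitimate element of $(B(\Omega))^*$ because evaluation at $z$ is bounded. Thus $M_\psi^* K_z$ is again a bounded functional on $B(\Omega)$, and it suffices to check that it agrees with $\psi(z) K_z$ on every $f \in B(\Omega)$.

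First I would unwind the definition of the adjoint. For an arbitrary $f \in B(\Omega)$,
$$
(M_\psi^* K_z)(f) = K_z(M_\psi f) = K_z(\psi \cdot f).
$$
Next I would use the defining property of the evaluation functional, namely $K_w(g) = g(w)$, applied to the function $\psi \cdot f$, which lies in $B(\Omega)$ precisely because $\psi$ is a multiplier. This gives
$$
K_z(\psi \cdot f) = (\psi \cdot f)(z) = \psi(z) f(z) = \psi(z) K_z(f).
$$
Since $f$ was arbitrary, I conclude $M_\psi^* K_z = \psi(z) K_z$.

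The computation is short, so rather than an obstacle the main point worth stressing is structural. Unlike the Hilbert-space relation $M_\psi^* k_z = \overline{\psi(z)}\, k_z$, no complex conjugate appears here: the pairing between $B(\Omega)$ and $(B(\Omega))^*$ is bilinear and $K_z$ enters linearly, so the scalar $\psi(z)$ is pulled out without conjugation. The only genuine hypotheses being used are that $M_\psi$ is bounded, which guarantees both that $M_\psi^*$ exists as a bounded operator and that $\psi\cdot f \in B(\Omega)$, and that each $K_z$ is bounded, which is the standing assumption of the lemma; both are part of the statement, so no further regularity of $\psi$ or of $\Omega$ is needed.
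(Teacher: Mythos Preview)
Your proof is correct and follows essentially the same one-line computation as the paper: apply the definition of the adjoint, then evaluate $K_z(\psi f)=\psi(z)f(z)=\psi(z)K_z(f)$. Your additional remark contrasting the bilinear Banach-space pairing with the sesquilinear Hilbert-space case (where one gets $\overline{\psi(z)}$) is a helpful clarification, but the core argument is identical to the paper's.
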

\begin{proof}For any $f\in B(\Omega),$ we have
$$
M_{\psi }^{*}K_{z}(f)=K_{z}(M_{\psi }f)=K_{z}(\psi f)=\psi(z) f(z)=\psi(z)K_{z}(f).
$$
Hence,  $M_{\psi }^{*}K_{z}=\psi(z)K_{z}.$
\end{proof}
\begin{lem}
Let	$B(\Omega)$ be the Banach space  of holomorphic functions on  $\Omega$. Assume that  the evaluation function $K_{z}$ is bounded on $B(\Omega ) $   for any $z\in \Omega.$  Let $\psi \in B(\Omega)$ and $\varphi=(\varphi_{1},\cdots ,\varphi_{n}):\Omega\to\Omega$ be a holomorphic self-map such that both $M_{\psi }$ and $C_\varphi$ are Fredholm operators. If $M_{\psi }C_\varphi: B(\Omega)\to B(\Omega)$ is Fredholm, then
\begin{enumerate}
		\item[(i)] If $n=1,$ $\psi$ has at most finitely many  zeros in  $\Omega .$
		\item[(ii)]  If $n>1,$ $\psi$ has no zero points  in  $\Omega .$
		\item[(iii)]$\varphi$ must be  univalent.
	\end{enumerate}
\end{lem}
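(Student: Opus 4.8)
The plan is to reduce all three conclusions to a single adjoint identity on evaluation functions. First I would record, for every $z\in\Omega$, that since $W_{\psi,\varphi}=M_\psi C_\varphi$ is bounded,
$$
(M_\psi C_\varphi)^* K_z = C_\varphi^* M_\psi^* K_z = \psi(z)\, C_\varphi^* K_z = \psi(z)\, K_{\varphi(z)},
$$
where I use Lemma~\ref{le5.1} for $M_\psi^* K_z=\psi(z)K_z$ and Proposition~\ref{le3.1} for $C_\varphi^* K_z=K_{\varphi(z)}$ (both operators are Fredholm, hence bounded, so these apply). Since $W_{\psi,\varphi}$ is Fredholm, $\dim\ker W_{\psi,\varphi}^*<\infty$, and this single finiteness is the quantitative input I will exploit. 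Note also that $\psi\not\equiv0$, for otherwise $W_{\psi,\varphi}=0$ could not be Fredholm on the infinite-dimensional space $B(\Omega)$.

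For parts (i) and (ii) I would show the zero set of $\psi$ must be small. If $\psi(z_0)=0$, the identity above gives $W_{\psi,\varphi}^* K_{z_0}=\psi(z_0)K_{\varphi(z_0)}=0$, so each zero of $\psi$ contributes a vector $K_{z_0}\in\ker W_{\psi,\varphi}^*$. By Assumption \textbf{(c)}, evaluation functions at distinct points are linearly independent, so an infinite set of distinct zeros would yield infinitely many linearly independent elements of $\ker W_{\psi,\varphi}^*$, contradicting Fredholmness. When $n=1$ the zeros of the nonzero holomorphic function $\psi$ are isolated, and the bound on $\dim\ker W_{\psi,\varphi}^*$ forces only finitely many of them in $\Omega$, giving (i). When $n>1$ the zero set of a nonzero holomorphic $\psi$ is a complex hypersurface of dimension $n-1\ge1$, hence contains infinitely many distinct points as soon as it is nonempty; therefore $\psi$ has no zero in $\Omega$, giving (ii).

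For part (iii), since $C_\varphi$ is assumed Fredholm, univalence of $\varphi$ is immediate from Lemma~\ref{le4.1}. For robustness I would also note that univalence can be recovered directly from the Fredholmness of $W_{\psi,\varphi}$ together with (i)--(ii): arguing by contradiction, if $\varphi(z_1)=\varphi(z_2)$ with $z_1\ne z_2$, then (as in the proof of Lemma~\ref{le4.1}) openness of $\varphi$ produces disjoint neighbourhoods $U_1\ni z_1$, $U_2\ni z_2$ with $\varphi(U_1)\cap\varphi(U_2)$ a nonempty open set, hence distinct points $w_k=\varphi(z_k)=\varphi(\tilde z_k)$ with $z_k\in U_1$, $\tilde z_k\in U_2$. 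By (i)--(ii), $\psi$ has at most finitely many zeros, so I may choose these pairs off the zero set; then
$$
W_{\psi,\varphi}^*\bigl(\psi(\tilde z_k)K_{z_k}-\psi(z_k)K_{\tilde z_k}\bigr)=\psi(z_k)\psi(\tilde z_k)\bigl(K_{w_k}-K_{w_k}\bigr)=0,
$$
and these vectors are nonzero and, the underlying points all being distinct, linearly independent across $k$, again contradicting $\dim\ker W_{\psi,\varphi}^*<\infty$. The genuinely routine parts are the adjoint computation and the independence bookkeeping; the main obstacle is the geometric dichotomy separating (i) from (ii), namely that the zero set of a holomorphic function is isolated when $n=1$ but always positive-dimensional, hence infinite, when $n>1$.
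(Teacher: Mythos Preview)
Your argument is correct and for (i)--(ii) it coincides with the paper's: the adjoint identity $(M_\psi C_\varphi)^*K_z=\psi(z)K_{\varphi(z)}$ together with linear independence of evaluation functionals forces $Z_\psi$ to be finite, and the geometric dichotomy (isolated zeros when $n=1$, positive-dimensional zero set when $n>1$) gives the split. For (iii) you take a different route. You invoke Lemma~\ref{le4.1} directly, which is legitimate since the statement as written assumes $C_\varphi$ itself is Fredholm. The paper instead proves univalence using only the Fredholmness of $M_\psi C_\varphi$: it first shows $\varphi$ is \emph{locally} univalent (else two sequences $z_k\neq w_k$ converging to the same point with $\varphi(z_k)=\varphi(w_k)$ produce infinitely many independent vectors $\frac{K_{z_k}}{\psi(z_k)}-\frac{K_{w_k}}{\psi(w_k)}$ in $\ker(M_\psi C_\varphi)^*$), deduces openness from local univalence, and then upgrades to global univalence as in Lemma~\ref{le4.1}. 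The paper's approach buys more: it is exactly what is needed in Theorem~5.3, where one starts only from $M_\psi C_\varphi$ Fredholm and must \emph{deduce} that $C_\varphi$ is Fredholm. Your backup argument for (iii), which appeals to ``openness of $\varphi$ as in the proof of Lemma~\ref{le4.1}'', silently reuses the hypothesis that $C_\varphi$ is Fredholm to get openness; if you want a version that works from the product alone, you need the local-univalence step the paper supplies.
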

\begin{proof}For any $z\in \Omega ,$ we have
$$
(M_{\psi }C_{\varphi })^{*}K_{z}=C_{\varphi }^{*}M_{\psi }^{*}K_{z}=C_{\varphi }^{*}(\psi(z)K_{z})=\psi(z)K_{\varphi (z)}
$$
by Proposition 3.1 and Lemma 5.1.

If $n=1,$ and $\psi$ has infinite zeros in  $\Omega ,$ then  there is an infinite point set $Z_{\psi }=\{z|\psi(z)=0\}\subset \Omega $ such that
$$
(M_{\psi }C_{\varphi })^{*}K_{z}=0 \hskip 5mm \mbox{for any} \hskip 4mm z\in Z_{\psi }.
$$
This contradicts to the Fredholmness of $M_{\psi }C_\varphi .$ It completes the proof of (i).

If $n>1,$ and $Z_{\psi }\neq \phi ,$ then $Z_{\psi }$ must be an  infinite point set,  because in  high-dimensional case, the set of zeros of holomorphic functions cannot be the set of isolated points. Note that
$$
(M_{\psi }C_{\varphi })^{*}K_{z}=0 \hskip 5mm \mbox{for any} \hskip 4mm z\in Z_{\psi },
$$
and this contradicts to the Fredholmness of $M_{\psi }C_{\varphi }.$
The  proof of (ii) is finished.

To prove (iii), we first prove  that $\varphi $ is local univalent, that is, for any $z\in \Omega ,$ there is an open neighborhood $U(z)$ of $z$ such that $\varphi $ is univalent on $U(z).$ Otherwise, we may find two sequences $\{z_{k}\}, \{w_{k}\}\subset \Omega $ which converge to $z$ such that $\varphi (z_{k})=\varphi (w_{k}).$ Since $\psi $ has at most finite  zeros  in  $\Omega ,$ without loss of generality,  assume both $\psi (z_{k})$ and  $\psi (w_{k})$ is not zero for any $k.$ Thus
$$
(M_{\psi }C_{\varphi })^{*}\bigg(\frac{K_{z_{k}}}{\psi (z_{k})}-\frac{K_{w_{k}}}{\psi (w_{k})}\bigg)=K_{\varphi (z_{k})}-K_{\varphi (w_{k})}=0.
$$
Note $\Big\{\frac{K_{z_{k}}}{\psi (z_{k})}-\frac{K_{w_{k}}}{\psi (w_{k})}\Big\}_k$ is a linear independent sequence, this contradicts to the Fredholmness of $ M_{\psi }C_{\varphi }.$ Hence, $\varphi $ must be local univalent.  Furthermore, for any $z\in \Omega ,$ there is an open neighborhood $U(z)$ of $z$ such that $\varphi (U(z))$ is also an open subset of $\Omega .$ Similar to the proof of Lemma 4.1, we may further prove that $\varphi $ is a univalent map on $\Omega .$  We omit the detail here. The proof of (iii) is thus completed.
\end{proof}
\begin{thm}
Let $\Omega$ and $B(\Omega)$ satisfy Assumptions {\bf(a)}, {\bf(b)} and {\bf(c)}. 
	Moreover, we assume that $M_{z_{i}}\in \mathcal{M}(B(\Omega )) (i=1,\ldots,n),$ the multiplier algebra on $B(\Omega )$,   and that
for any domain $\Omega_{1} $ with $\Omega \subsetneqq \Omega_{1},$ there is at least an $h\in B(\Omega )$ such that $h$ can not be analytically extended to $\Omega_{1}.$ 
 Let $\varphi:\Omega\to\Omega$ be a holomorphic self-map. Then the
	following are equivalent:
\begin{enumerate}
		\item[(i)] $M_{\psi }C_\varphi: B(\Omega)\to B(\Omega)$ is Fredholm.
		\item[(ii)] Both $M_{\psi }$ and $C_\varphi $ are Fredholm.
\end{enumerate}
\end{thm}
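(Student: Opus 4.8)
The plan is to dispatch the implication $(ii)\Rightarrow(i)$ in one line and to concentrate on $(i)\Rightarrow(ii)$. For $(ii)\Rightarrow(i)$: if $M_\psi$ and $C_\varphi$ are both Fredholm, then the composition $M_\psi C_\varphi$ is Fredholm, since the Fredholm operators form a multiplicatively closed class (with indices adding). For $(i)\Rightarrow(ii)$, I would write $W=M_\psi C_\varphi$, assume $W$ is Fredholm, and work from the identity $W^*K_z=\psi(z)K_{\varphi(z)}$ (from Proposition~\ref{le3.1} and Lemma~\ref{le5.1}) together with the fact that $K_w\neq 0$ for every $w\in\Omega$ (Remark~\ref{rem4.2}, via Assumption {\bf(c)}).

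First I would record the zero structure of $\psi$ and the univalence of $\varphi$. These are exactly the conclusions of the preceding lemma, and the point to emphasize — so as to keep the logic non-circular — is that those proofs use only the Fredholmness of $W$, not of $M_\psi$ or $C_\varphi$ separately. Indeed $W^*K_z=0$ if and only if $\psi(z)=0$; since $\dim\ker W^*<\infty$ and distinct points yield linearly independent evaluation functions by {\bf(c)}, the zero set $Z_\psi$ is finite, and empty when $n>1$ (a nonempty zero variety in several variables being infinite). Univalence of $\varphi$ then follows by forming $W^*\big(K_{z_k}/\psi(z_k)-K_{\tilde z_k}/\psi(\tilde z_k)\big)=K_{\varphi(z_k)}-K_{\varphi(\tilde z_k)}=0$ along collision sequences chosen to avoid the finitely many zeros of $\psi$, and then passing from local to global injectivity exactly as in Lemma~\ref{le4.1}.

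The decisive step is to upgrade univalence to $\varphi\in\aut$, after which the conclusion is immediate: Assumption {\bf(b)} makes $C_\varphi$ invertible, hence Fredholm, and then $M_\psi=W C_\varphi^{-1}$ is Fredholm as the product of a Fredholm operator and an invertible one. To obtain $\varphi\in\aut$ I would mimic the proof of Theorem~\ref{th1.2}, replacing $C_\varphi$ by $W$ throughout. Using that $M_{z_i}\in\mathcal M(B(\Omega))$ and $\dim\big(B(\Omega)/R(W)\big)<\infty$, I would expand each monomial as $z_i^k=\psi\cdot(f_{i,k}\circ\varphi)+\sum_j\alpha_{i,j}^k h_j$ and compose with $\varphi^{-1}$ on $\varphi(\Omega)$ to obtain $(\varphi_i^{-1})^k=(\psi\circ\varphi^{-1})\,f_{i,k}|_{\varphi(\Omega)}+\sum_j\alpha_{i,j}^k(h_j\circ\varphi^{-1})$. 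One then runs the same finite-codimension contradiction as in Theorem~\ref{th1.2}: if $\varphi^{-1}$ failed to extend to $\Omega$, or if $\varphi(\Omega)\subsetneq\Omega$, the powers $(\varphi_i^{-1})^k$ would span an infinite-dimensional quotient, which (invoking the non-extension hypothesis on $B(\Omega)$ to produce a witness $h$) contradicts the finite codimension forced by the displayed identities; this is the analogue of Cases 1 and 2 of Theorem~\ref{th1.2}.

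The hard part is precisely this last step: the extra factor $\psi\circ\varphi^{-1}$, which now multiplies the regular part $f_{i,k}|_{\varphi(\Omega)}$, must be shown not to collapse the infinite-dimensional quotient generated by the powers. Here Step 1 is used crucially. Since $\psi$ — and hence $\psi\circ\varphi^{-1}$ — vanishes only on a finite set, multiplication by $\psi\circ\varphi^{-1}$ is injective and cannot turn an infinite-dimensional family of germs into a finite-dimensional one; moreover every $f\in B(\Omega)|_{\varphi(\Omega)}$ extends holomorphically across any interior point $p\in\Omega\cap\partial\varphi(\Omega)$, so the whole space $\operatorname{span}\{h_j\circ\varphi^{-1}\}+(\psi\circ\varphi^{-1})\,B(\Omega)|_{\varphi(\Omega)}$ carries only a fixed obstruction to extension at $p$, independent of $k$, while $(\varphi_i^{-1})^k$ becomes increasingly non-extendable. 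Carrying the factor $\psi\circ\varphi^{-1}$ cleanly through the extension/dimension bookkeeping of Theorem~\ref{th1.2}, and verifying this comparison in both geometric cases, is where the principal technical effort lies.
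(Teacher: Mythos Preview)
Your proposal is correct and follows the paper's strategy almost exactly: dispatch $(ii)\Rightarrow(i)$ trivially, extract the zero structure of $\psi$ and the univalence of $\varphi$ from Lemma~5.2 (correctly noting that its proof uses only the Fredholmness of $W=M_\psi C_\varphi$, not of the factors separately), then rerun the Theorem~1.2 argument to force $\varphi\in\aut$, whence $C_\varphi$ is invertible and $M_\psi=WC_\varphi^{-1}$ is Fredholm.

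The one place where the paper is cleaner than your sketch is the handling of the factor $\psi\circ\varphi^{-1}$ in the finite-codimension step. Rather than keeping $(\varphi_i^{-1})^k=(\psi\circ\varphi^{-1})\,f_{i,k}|_{\varphi(\Omega)}+\sum_j\alpha_{i,j}^k(h_j\circ\varphi^{-1})$ and arguing, as you do, that multiplication by $\psi\circ\varphi^{-1}$ is injective and hence cannot collapse the infinite-dimensional quotient, the paper simply divides through by $\psi\circ\varphi^{-1}$ on $\varphi(\Omega)\setminus Z_{\psi\circ\varphi^{-1}}$ (a finite set removed) to obtain
\[
\frac{1}{\psi\circ\varphi^{-1}}(\varphi_i^{-1})^k = f_{i,k}\big|_{\varphi(\Omega)} + \frac{1}{\psi\circ\varphi^{-1}}\sum_j\alpha_{i,j}^k\,h_j\circ\varphi^{-1}.
\]
Now the ``regular'' part is literally $B(\Omega)|_{\varphi(\Omega)}$ as in Theorem~1.2, the perturbation is still a fixed finite-dimensional space, and the left side remains an infinite linearly independent family of non-extendable functions. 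This sidesteps entirely the bookkeeping you flag as ``the hard part,'' and the rest of the argument then proceeds verbatim as in Theorem~1.2 (including the step with the non-extendable $h$). Your route would also go through, but the division trick makes the adaptation essentially mechanical.
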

\begin{proof}Obviously, if both  $M_{\psi }C_\varphi$ and $C_\varphi$ are Fredholm, then $M_{\psi }$ is also Fredholm.  To complete (i)$\Rightarrow $ (ii), we need only to prove that $C_\varphi$ is Fredholm under assuming  that $M_{\psi }C_\varphi$ is Fredholm. By Lemma 5.2, $\varphi $ is univalent, we are to prove that $\varphi $ is a surjection. Assume the contrary,
$\Omega -\varphi(\Omega )\neq \phi .$

If $\varphi^{-1} $ may be  analytically extended to $\Omega ,$ similar to the proof of Theorem 1.2,  we see easily that $C_{\varphi^{-1}}$ is the inverse of $C_{\varphi }$ and $\varphi \in \aut .$  Thus we may assume that $\varphi $ may not be analytically extended to $\Omega .$ The following proof is similar to that of Theorem 1.2, but for convenience of readers,  the detail of the proof is given here.

 Since $M_{\psi }C_{\varphi }$ is Fredholm, we see that
$R(M_{\psi }C_{\varphi }),$ the range of $C_{\varphi },$ is closed, and
$$
\dim [B(\Omega )/R(M_{\psi }C_{\varphi })]=\dim\ker (M_{\psi }C_{\varphi})^{*}<\infty .
$$
Let $M$ be the subspace of $B(\Omega )$ such that $B(\Omega )=R(M_{\psi }C_{\varphi })+M,$ and $\{h_{i}\}_{i=1}^{m}$ is the base of $M$, thus for any $f\in B(\Omega ),$ there are $\alpha_{i}\in \mathbb{C} ,i=1,\cdots m,$ and $g\in B(\Omega )$ such that
$$
f =M_{\psi }C_{\varphi }g+\sum_{i=1}^{m}\alpha_{i}h_{i}.
$$
In particular, for any $k\in \mathbb{N},$ there are $f_{i,k}\in B(\Omega )$ and $\alpha_{i,j}^{k}\in \mathbb{C}$ such that
$$
z_{i}^{k}=M_{\psi }C_{\varphi }f_{i,k}+\sum_{j=1}^{m}\alpha_{i,j}^{k}h_{j},\hskip 5mm i=1,2,\ldots, n.
$$
Thus
$$
(\varphi_{i}^{-1})^{k}=(\psi \circ\varphi^{-1})\cdot f_{i,k}|_{\varphi (\Omega )}+\sum_{j=1}^{m}\alpha_{i,j}^{k}h_{j}\circ\varphi^{-1}, \hskip 5mm i=1,2,\ldots, n,
$$
 where $\varphi_{i}^{-1}$ is the $i$th component of $\varphi^{-1}.$
 Further
$$
\frac{1}{\psi \circ  \varphi^{-1}}\cdot (\varphi_{i}^{-1})^{k}= f_{i,k}|_{\varphi (\Omega )}+\frac{1}{\psi \circ \varphi^{-1}}\cdot \sum_{j=1}^{m}\alpha_{i,j}^{k}h_{j}\circ\varphi^{-1}, \hskip 5mm i=1,2,\ldots, n
$$
on $\Omega -Z_{\psi \circ \varphi^{-1}}.$

Note $Z_{\psi  \circ \varphi^{-1}}$ is at most a finite  subset of $\Omega ,$ we see that $\{\frac{1}{\psi \circ  \varphi^{-1}}\cdot (\varphi_{i}^{-1})^{k}\}_{k=1}^{\infty }$ is a linearly independent sequence in $H(\Omega -Z_{\psi  \circ \varphi^{-1}}).$
Set
$$
B=\bigvee\Bigg\{B(\Omega )|_{\varphi(\Omega )},\bigg\{\frac{1}{\psi \circ  \varphi^{-1}}\cdot (\varphi_{i}^{-1})^{k}\bigg\}_{k=1}^{\infty },i=1,\ldots,n\Bigg\},
$$
here $B$ is the  vector space spanned by  $B(\Omega )|_{\varphi(\Omega )}$ and $\{\frac{1}{\psi \circ  \varphi^{-1}}\cdot (\varphi_{i}^{-1})^{k}\}_{k=1}^{\infty }, i=1,2,\ldots ,n.$ If $\varphi^{-1}$ can not be analytically extended to $\Omega ,$ then
$$
\dim(B/B(\Omega )|_{\varphi(\Omega )})=\dim \bigvee_{k=1}^{\infty }\bigg\{{\frac{1}{\psi \circ  \varphi^{-1}}\cdot (\varphi_{i}^{-1})^{k}},i=1,2,\ldots,n\bigg\}=\infty .
$$
However,
$
\dim(B/B(\Omega )|_{\varphi(\Omega )})<\infty
$
by
$$
\frac{1}{\psi \circ  \varphi^{-1}}\cdot (\varphi_{i}^{-1})^{k}= f_{i,k}|_{\varphi (\Omega )}+\frac{1}{\psi \circ \varphi^{-1}}\cdot \sum_{j=1}^{m}\alpha_{i,j}^{k}C_{\varphi^{-1}}h_{j}, \hskip 5mm i=1,2,\ldots, n.
$$
This contradiction shows that  $\varphi ^{-1}$ can be analytically extended onto $\Omega .$

Similar to the proof of Theorem 1.2, Write $\widetilde{\varphi^{-1}}$ is the analytic extension of $\varphi^{-1}.$ We are to prove that $\widetilde{\varphi^{-1}}(\Omega )= \Omega .$ Assume the contrary, that is, $ \Omega \subsetneqq \widetilde{\varphi^{-1}}(\Omega ).$ Let $\Omega_{1}=\widetilde{\varphi^{-1}}(\Omega ),$
 by
$$
z_{i}^{k}=M_{\psi }C_{\varphi }f_{i,k}+\sum_{j=1}^{m}\alpha_{i,j}^{k}h_{j},\hskip 5mm i=1,2,\ldots, n.
$$
we have
$$
(\widetilde{\varphi_{i}^{-1}})^{k}=(\psi\circ\widetilde{\varphi^{-1}})f_{i,k}\circ \varphi \circ \widetilde{\varphi^{-1}}+\sum_{j=1}^{m}\alpha_{i,j}^{k}h_{j}\circ \widetilde{\varphi^{-1}},\hskip 5mm i=1,2,\ldots, n.
$$
Thus
$$
(\psi\circ\widetilde{\varphi^{-1}})f_{i,k}\circ \varphi \circ \widetilde{\varphi^{-1}}=(\widetilde{\varphi_{i}^{-1}})^{k}-\sum_{j=1}^{m}\alpha_{i,j}^{k}h_{j}\circ \widetilde{\varphi^{-1}},\hskip 5mm i=1,2,\ldots, n.
$$
If $\varphi $ is undefined on $\Omega_{1}-\Omega ,$ then $(\psi\circ\widetilde{\varphi^{-1}})f_{i,k}\circ \varphi \circ \widetilde{\varphi^{-1}}$ is undefined on $\Omega_{1}-\Omega .$
Set
$$
B=\bigvee_{k=1}^{\infty }\bigg\{(\psi\circ\widetilde{\varphi^{-1}})f_{i,k}\circ \varphi \circ \widetilde{\varphi^{-1}},(\widetilde{\varphi_{i}^{-1}})^{k},i=1,\ldots,n\bigg\},
$$
$$
B_{0}=\bigvee_{k=1}^{\infty }\bigg\{(\widetilde{\varphi_{i}^{-1}})^{k},i=1,\ldots,n\bigg\},
$$
here $B$ is the  vector space spanned by  $\{(\psi\circ\widetilde{\varphi^{-1}})f_{i,k}\circ \varphi \circ \widetilde{\varphi^{-1}},i=1,\ldots,n\}_{k=1}^{\infty }$ and $\{(\widetilde{\varphi_{i}^{-1}})^{k}, i=1,2,\ldots ,n\}_{k=1}^{\infty }$ and $B_{0}$  is the  vector space spanned by  $\{(\widetilde{\varphi_{i}^{-1}})^{k}, i=1,2,\ldots ,n\}_{k=1}^{\infty }.$ It is clear that
$$
\dim(B/B_{0})=\dim \bigvee_{k=1}^{\infty }\bigg\{(\psi\circ\widetilde{\varphi^{-1}})f_{i,k}\circ \varphi \circ \widetilde{\varphi^{-1}},i=1,\ldots,n\bigg\}=\infty
$$
since  $(\psi\circ \widetilde{\varphi^{-1}} )f_{i,k}\circ \varphi \circ \widetilde{\varphi^{-1}}$ is undefined on $\Omega_{1} -\Omega.$ However,
$$
\dim(B/B_{0})<\infty
$$
by
$$
(\psi\circ\widetilde{\varphi^{-1}})f_{i,k}\circ \varphi \circ \widetilde{\varphi^{-1}}=(\widetilde{\varphi_{i}^{-1}})^{k}-\sum_{j=1}^{m}\alpha_{i,j}^{k}h_{j}\circ \widetilde{\varphi^{-1}},\hskip 5mm i=1,2,\ldots, n.
$$
This contradiction shows that $\varphi $ may be analytically extended to $\Omega_{1} .$ Let $\widetilde{\varphi }$ be the extension of $\varphi .$ Then
$$
\widetilde{\varphi }\circ \widetilde{\varphi^{-1}}|_{\varphi (\Omega )}=I_{\varphi (\Omega )},
$$
the identity on $\varphi(\Omega ).$ This shows that $ \widetilde{\varphi }\circ \widetilde{\varphi^{-1}}=I_{\Omega }$ by  the uniqueness of analytic extension. Thus $\widetilde{\varphi }$ is univalent on $\Omega_{1}$ and $ \widetilde{\varphi }(\Omega_{1})=\Omega .$  On the other hand,
$$
\widetilde{\varphi^{-1}}\circ \widetilde{\varphi}|_{\Omega }=\widetilde{\varphi^{-1}}\circ \varphi |_{\Omega }=I_{\Omega },
$$
the  identity on $\Omega .$ Hence $\widetilde{\varphi^{-1}}$ is univalent  on $\Omega .$ This shows that it  must be $\varphi (\Omega )=\Omega .$ In fact, if $\varphi(\Omega) \subsetneqq \Omega,$ then $\Omega \subsetneqq \Omega_{1}.$ Choose $h\in B(\Omega )$ such that $h$ can not be extended to $\Omega_{1}.$ then for any $k\in \mathbb{N},$ there is an $f_{i,k}\in B(\Omega )$ such that
$$
z_{i}^{k}h=M_{\psi }C_{\varphi }f_{i,k}+\sum_{j=1}^{m}\alpha_{i,j}^{k}h_{j},\hskip 5mm i=1,2,\ldots, n.
$$
Thus
$$
\frac{1}{\psi\circ\widetilde{\varphi^{-1}}}(\widetilde{\varphi_{i}^{-1}})^{k}h\circ\widetilde{\varphi^{-1}}=f_{i,k}\circ\varphi \circ\widetilde{\varphi^{-1}}+\frac{1}{\psi\circ\widetilde{\varphi^{-1}}}\sum_{j=1}^{m}\alpha_{i,j}^{k}h_{j}\circ\widetilde{\varphi^{-1}},
$$
$ i=1,2,\ldots, n.$
Note $\frac{1}{\psi\circ\widetilde{\varphi^{-1}}}(\widetilde{\varphi_{i}^{-1}})^{k}h\circ\widetilde{\varphi^{-1}}$ is undefined on $\Omega -\varphi(\Omega)$ since $h$  can not be extended to $\Omega_{1}-\Omega.$ But $f_{i,k}\circ\varphi \circ\widetilde{\varphi^{-1}}$ is well-defined on $\Omega $ by $\widetilde{\varphi }(\Omega_{1})=\Omega .$ Set
$$
B=\bigvee_{k=1}^{\infty }\bigg\{f_{i,k}\circ \varphi \circ \widetilde{\varphi^{-1}},\frac{1}{\psi\circ\widetilde{\varphi^{-1}}}(\widetilde{\varphi_{i}^{-1}})^{k}h\circ\widetilde{\varphi^{-1}} ,i=1,\ldots,n\bigg\},
$$
$$
B_{0}=\bigvee_{k=1}^{\infty }\bigg\{f_{i,k}\circ \varphi \circ \widetilde{\varphi^{-1}},i=1,\ldots,n\bigg\}.
$$
Obviously,
$$
\dim(B/B_{0})=\dim \bigvee_{k=1}^{\infty }\bigg\{\frac{1}{\psi\circ\widetilde{\varphi^{-1}}}(\widetilde{\varphi_{i}^{-1}})^{k}h\circ\widetilde{\varphi^{-1}} ,i=1,\ldots,n\bigg\}=\infty .
$$
However,
$$
\dim(B/B_{0})<\infty
$$
by
$$
\frac{1}{\psi\circ\widetilde{\varphi^{-1}}}(\widetilde{\varphi_{i}^{-1}})^{k}h\circ\widetilde{\varphi^{-1}}=f_{i,k}\circ\varphi \circ\widetilde{\varphi^{-1}}+\frac{1}{\psi\circ\widetilde{\varphi^{-1}}}\sum_{j=1}^{m}\alpha_{i,j}^{k}h_{j}\circ\widetilde{\varphi^{-1}},
$$
$i=1,2,\ldots, n.$
This contradiction shows that $\varphi (\Omega )=\Omega .$
Further $\varphi \in \aut .$
That is $C_{\varphi }$ is Fredholm. Consequently, $M_{\psi }$ is also Fredholm. The proof is thus  complete.
\end{proof}
\section{Further Remarks}
According to  Theorem 5.3,  if $M_{\psi }C_\varphi$ is a Fredholm operator on $B(\Omega ),$ then $C_{\varphi }$ is also a Fredholm operator,  so $\varphi \in \aut .$  However,  it is difficult to see the characteristics of the symbol $\psi $ according to the Fredholmness of $M_{\psi }.$ It can be seen from Lemma 5.1 that if $M_{\psi }$ is bounded on$B(\Omega ),$ then $\psi \in H^{\infty }(\Omega ),$ the algebra of bounded holomorphic functions on $\Omega ,$ and $\overline{\psi (\Omega )}\subset \sigma (M_{\psi }),$  the spectrum of $M_{\psi }.$ But we may wonder whether $\overline{\psi (\Omega )}= \sigma (M_{\psi })$ or not.  If
$$
k_{z}=\frac{K_{z}}{\|K_{z}\|}\stackrel{weak^{*}}{\longrightarrow} 0\hskip 5mm \mbox{as}\hskip 5mm  z\rightarrow  \partial \Omega ,
$$
then  we have
$$
\bigcap_{K\subset \Omega }\overline{\psi(\Omega -K)}\subset \sigma_{e} (M_{\psi }),
$$
where $ \sigma_{e} (M_{\psi })$ is the essential spectrum of  $M_{\psi },$ and  $K$ is the compact subset of $\Omega .$ But we do not know whether there is
$$
\bigcap_{K\subset \Omega }\overline{\psi(\Omega -K)}= \sigma_{e} (M_{\psi }).
$$

This problem involves another interesting problem, namely the reciprocal problem, which  was firstly raised by K.H. Zhu  in \cite{Z1} for the weighted Bergman space. He also proposed the following conjecture
\begin{thmc}[\cite{Z1}]\label{c1}
Let $0<p<\infty $ and $\alpha $ be real. If $f \in A^{p}_{\alpha }$ and there exists a constant $c > 0$ such that $|f(z)| \geq  c$ for all $z \in  \bn,$ then $1/f \in A^{p}_{\alpha }$.
\end{thmc}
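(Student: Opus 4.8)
The plan is to first dispose of the easy ranges of the parameter $\alpha$ and then reduce the conjecture, in the remaining range, to a pointwise multiplier estimate for radial derivatives. Throughout write $g=1/f$; since $|f|\ge c$ on $\bn$, the function $g$ is holomorphic with $|g(z)|\le 1/c$, so the only issue is integrability, never pointwise growth. If $\alpha>-1$, the weight $(1-|z|^{2})^{\alpha}$ gives a finite measure on $\bn$ and every bounded holomorphic function lies in $A^{p}_{\alpha}$, whence $g\in A^{p}_{\alpha}$ trivially. From now on I assume $\alpha\le -1$ and use the standard description of $A^{p}_{\alpha}$ through radial derivatives: fixing the least integer $k$ with $\alpha+pk>-1$, one has $h\in A^{p}_{\alpha}$ if and only if $R^{k}h\in A^{p}_{\alpha+pk}$, where $R=\sum_{i}z_{i}\partial_{i}$ and $A^{p}_{\alpha+pk}$ is now a genuine weighted Bergman space with admissible weight.

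The engine of the argument is that $R$ is a derivation, so Leibniz's rule applies to $R^{k}$. Differentiating $fg=1$ gives $R^{k}(fg)=0$ for $k\ge 1$, whence
\begin{equation*}
R^{k}g=-\frac{1}{f}\sum_{j=1}^{k}\binom{k}{j}(R^{j}f)(R^{k-j}g).
\end{equation*}
Because $g$ is bounded, the Cauchy (Schwarz--Pick type) estimates for bounded holomorphic functions give $|R^{m}g(z)|\le C_{m}(1-|z|^{2})^{-m}$ for every $m\ge 0$. Combining $|1/f|\le 1/c$ with these bounds on the factors $R^{k-j}g$ yields
\begin{equation*}
|R^{k}g(z)|\le C\sum_{j=1}^{k}|R^{j}f(z)|\,(1-|z|^{2})^{-(k-j)}.
\end{equation*}
Raising to the power $p$, integrating against $(1-|z|^{2})^{\alpha+pk}$, and absorbing the factor $(1-|z|^{2})^{-p(k-j)}$ into the weight reduces everything to showing
\begin{equation*}
\inb |R^{j}f(z)|^{p}(1-|z|^{2})^{\alpha+pj}\,dV(z)<\infty,\qquad 1\le j\le k.
\end{equation*}

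For $j=k$ this is exactly $\|f\|_{A^{p}_{\alpha}}^{p}$, finite by hypothesis. The remaining terms carry the difficulty. If $\alpha+pj>-1$ for all $j\ge 1$, i.e. $\alpha>-1-p$, each integral is again a genuine weighted Bergman quantity controlled by $\|f\|_{A^{p}_{\alpha}}$ via the equivalence of derivative norms, so the conjecture follows on the band $-1-p<\alpha\le -1$. At the other extreme, when $\alpha\le -(n+1)$ the growth exponent $(n+1+\alpha)/p$ is nonpositive and $A^{p}_{\alpha}$ embeds in an algebra of boundary-regular functions; there $f$ bounded below forces $1/f$ into the same algebra and the claim is routine. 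The hard regime is the intermediate one, roughly $-(n+1)<\alpha\le -1-p$ (nonempty precisely when $p<n$): here some exponents $\alpha+pj$ with $j<k$ satisfy $\alpha+pj\le -1$, so $\inb|R^{j}f|^{p}(1-|z|^{2})^{\alpha+pj}\,dV$ is an integral against a non-admissible singular weight and is strictly stronger than the defining norm of $f$.

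Thus the main obstacle is exactly the failure of $A^{p}_{\alpha}$ to be an algebra in this middle range: the multiplier estimate that sends a product $(R^{j}f)(R^{k-j}g)$ into $A^{p}_{\alpha+pk}$ needs $\alpha+pj>-1$, which is unavailable. My proposed remedy is a bootstrapping induction that lowers the number of derivatives required: since $f\in A^{p}_{\alpha}$ if and only if $Rf\in A^{p}_{\alpha+p}$ (up to constants), one replaces the single identity $fg=1$ by the family $R^{j}g=Q_{j}(Rf,\dots,R^{j}f)/f^{j+1}$ and propagates membership from the admissible end $\alpha+pk>-1$ down to the target, at each stage invoking only multiplier estimates with admissible exponents. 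Carrying this out cleanly hinges on uniform control of the fractional intermediate seminorms $\inb|R^{j}f|^{p}(1-|z|^{2})^{\alpha+pj}\,dV$ when $\alpha+pj\le -1$, equivalently on showing that inverse radial derivatives of Bergman functions stay integrable against the borderline singular weights; I expect this to be the crux. It is also the analytic heart of the companion question in the Further Remarks on whether $\overline{\psi(\Omega)}=\sigma(M_{\psi})$, since $1/f\in A^{p}_{\alpha}$ is precisely what makes $M_{f}$ invertible when $f$ is bounded away from zero.
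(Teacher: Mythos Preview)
The statement you are trying to prove is presented in the paper as a \emph{conjecture} (Conjecture~1, attributed to K.~H.~Zhu), not as a theorem. The paper offers no proof; immediately after stating it the authors write ``The case when $\alpha$ is somewhere in the middle is difficult,'' and then move on to formulate analogous conjectures for general $B(\Omega)$. So there is no ``paper's own proof'' to compare against: the result is open in the range that matters.

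Your proposal is an accurate map of the terrain rather than a proof. You correctly dispose of $\alpha>-1$ (bounded $g=1/f$ lies in a finite-measure weighted Bergman space) and you correctly set up the radial-derivative characterization and the Leibniz recursion $R^{k}g=-f^{-1}\sum_{j=1}^{k}\binom{k}{j}(R^{j}f)(R^{k-j}g)$. The argument you give for the band $-1-p<\alpha\le -1$ is fine. But in the intermediate range you yourself identify the obstruction---the integrals $\int_{\mathbb{B}_n}|R^{j}f|^{p}(1-|z|^{2})^{\alpha+pj}\,dV$ with $\alpha+pj\le -1$ are not controlled by $\|f\|_{A^{p}_{\alpha}}$---and your ``bootstrapping induction'' is only a sketch: you say it ``hinges on'' and that you ``expect this to be the crux,'' without supplying the estimate. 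That missing step is exactly the open part of the conjecture, so the proposal does not close the gap. Your claim for $\alpha\le -(n+1)$ that $A^{p}_{\alpha}$ embeds in an algebra is in the right spirit (one lands in holomorphic Lipschitz classes, which are algebras), but even there a careful statement with the correct threshold would be needed before calling it ``routine.''
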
\label{c1}
The  case  when $\alpha $ is somewhere in the middle is difficult. We naturally propose a similar conjecture for $B(\Omega ):$
\begin{thmc}\label{c1}
Let	$B(\Omega)$ be the Banach space  of holomorphic functions on a   domain  $\Omega$.  If $\psi  \in B(\Omega)$ and there exists a constant $c > 0$ such that $|\psi (z)| \geq  c$ for all $z \in  \Omega ,$ then $1/\psi  \in B(\Omega).$
\end{thmc}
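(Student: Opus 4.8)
The plan is to reduce the conjecture to the invertibility of the multiplication operator $M_\psi$, and then to the spectral identity $\sigma(M_\psi)=\overline{\psi(\Omega)}$ raised as an open problem in the Further Remarks. First I would record that the hypothesis $|\psi(z)|\ge c>0$ already forces $1/\psi$ to be a bounded holomorphic function on $\Omega$, with $\|1/\psi\|_{H^\infty(\Omega)}\le 1/c$; the entire issue is membership in the specific space $B(\Omega)$. Working in the natural setting of this section, where $\psi$ induces a bounded multiplier $M_\psi\in\mathcal{M}(B(\Omega))$, the crucial observation is that it suffices to prove $M_\psi$ is \emph{invertible} as a bounded operator on $B(\Omega)$. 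Indeed, writing $S=M_\psi^{-1}$ and applying $S$ to the constant function $\mathbf{1}\in B(\Omega)$ (which lies in $B(\Omega)$ since the polynomials do), one has $\psi\cdot (S\mathbf{1})=M_\psi(S\mathbf{1})=\mathbf{1}$ pointwise on $\Omega$, whence $S\mathbf{1}=1/\psi$ as holomorphic functions. Since $S\mathbf{1}\in B(\Omega)$, this gives $1/\psi\in B(\Omega)$ directly, without needing to know a priori that $1/\psi$ is itself a multiplier.

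It therefore suffices to establish $0\notin\sigma(M_\psi)$. By Lemma~\ref{le5.1} we have $M_\psi^{*}K_z=\psi(z)K_z$, so each value $\psi(z)$ is an eigenvalue of $M_\psi^{*}$; together with the inclusion $\overline{\psi(\Omega)}\subseteq\sigma(M_\psi)$ recorded in the Further Remarks, this shows $\sigma(M_\psi)$ always contains $\overline{\psi(\Omega)}$. The hypothesis $|\psi|\ge c$ says precisely that $0\notin\overline{\psi(\Omega)}$. Consequently the conjecture follows at once from the reverse inclusion $\sigma(M_\psi)\subseteq\overline{\psi(\Omega)}$, that is, from the spectral identity $\sigma(M_\psi)=\overline{\psi(\Omega)}$ flagged as open in Section~6. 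In operator-theoretic terms, I would try to prove that $M_\psi$ is bounded below and has closed range whenever $0\notin\overline{\psi(\Omega)}$.

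To attempt the bounded-below estimate I would work through the evaluation functionals. For each $z\in\Omega$ and $f\in B(\Omega)$ one has $|K_z(M_\psi f)|=|\psi(z)|\,|f(z)|\ge c\,|K_z(f)|$, so $M_\psi$ dominates the identity pointwise on $\Omega$. The difficulty is that in a general Banach space of holomorphic functions the norm is not recovered from pointwise values, so this domination does not by itself yield a lower bound $\|\psi f\|\ge \delta\|f\|$. Where extra structure is available (a norm comparable to an integral norm, or a Hilbert-space structure) one can integrate the pointwise estimate to obtain the lower bound; in the general case I would instead try a holomorphic functional calculus / Neumann-series argument, writing $\psi=\lambda(1-u)$ with $\lambda=\psi(z_0)\ne 0$ and attempting to control the multiplier norm of $u=1-\psi/\lambda$, or exploiting the density of polynomials together with the standing hypothesis $M_{z_i}\in\mathcal{M}(B(\Omega))$ to approximate $1/\psi$ by multipliers.

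The main obstacle is exactly this final step: upgrading the pointwise bound $|\psi|\ge c$ to invertibility of $M_\psi$, equivalently establishing $\sigma(M_\psi)\subseteq\overline{\psi(\Omega)}$. This is a corona-type inverse-closedness phenomenon for the multiplier algebra and is genuinely delicate; it is the direct analogue of Zhu's Conjecture for weighted Bergman spaces \cite{Z1}, which is known to be hard precisely for intermediate weights. I therefore expect that a complete proof in full generality is not attainable under the present hypotheses alone, and that one must either impose additional structural assumptions on $B(\Omega)$ guaranteeing $\sigma(M_\psi)=\overline{\psi(\Omega)}$, or prove that identity directly. The reduction above has at least the virtue of showing that the conjecture is equivalent to this spectral identity holding at the single point $0$.
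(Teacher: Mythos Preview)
The statement you are addressing is not a theorem in the paper but an open \emph{conjecture} (Conjecture~2 in Section~6); the paper offers no proof and explicitly flags the problem as unresolved. There is therefore no ``paper's proof'' to compare against. Your proposal is not a proof either, as you yourself acknowledge at the end, but rather a reduction.

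That reduction, however, silently strengthens the hypothesis: Conjecture~2 assumes only $\psi\in B(\Omega)$, whereas your entire argument requires $M_\psi$ to be a bounded operator on $B(\Omega)$, i.e.\ $\psi\in\mathcal{M}(B(\Omega))$. With that extra assumption you are no longer working on Conjecture~2 but on the paper's weaker Conjecture~3, and your conclusion---that the problem reduces to the spectral identity $\sigma(M_\psi)=\overline{\psi(\Omega)}$---is precisely the observation the paper already records in its last sentence (``Obviously, the conjecture is equivalent to $\overline{\psi(\Omega)}=\sigma(M_\psi)$''). So your write-up recovers what the authors already state, under an added hypothesis they themselves separate out. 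For the actual Conjecture~2 as stated, with only $\psi\in B(\Omega)$, your operator-theoretic route does not even get started, since $M_\psi$ need not be bounded and there is no spectrum to speak of; a genuine attack on Conjecture~2 would have to proceed differently.
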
\label{c1}
In \cite{CHZ}, the authors proved that if $f\in \mathcal{M}( A^{2}_{\alpha }),$  the algebra of multipliers on $A^{2}_{\alpha }$ (corresponding Hardy--Sobolev space),  then $\overline{f(\bn)}=\sigma(M_{f}).$ Furthermore, if there exists a constant $c > 0$ such that $|f(z)| \geq  c$ for all $z \in  \bn,$ then $1/f \in A^{2}_{\alpha }.$ Thus there is a weaker  conjecture
\begin{thmc}\label{c1}
Let	$B(\Omega)$ be the Banach space  of holomorphic functions on a   domain  $\Omega$.  If $\psi  \in \mathcal{M}(B(\Omega))$ and there exists a constant $c > 0$ such that $|\psi (z)| \geq  c$ for all $z \in  \Omega ,$ then $1/\psi  \in B(\Omega).$
\end{thmc}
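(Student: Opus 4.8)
The plan is to reduce the conjecture to an invertibility (equivalently, a spectral) statement about the multiplication operator $M_\psi$, and then to attack that statement through the commutative Banach algebra of multipliers. First I would observe that the constant function $1$ is a polynomial, hence lies in $B(\Omega)$, and that $\psi\in\mathcal M(B(\Omega))$ yields a bounded operator $M_\psi$ with $M_\psi 1=\psi$. If one can show that $M_\psi$ is invertible on $B(\Omega)$, then $1/\psi=M_\psi^{-1}1\in B(\Omega)$ and the conjecture follows; in fact one then obtains the stronger conclusion $1/\psi\in\mathcal M(B(\Omega))$. So everything reduces to proving $0\notin\sigma(M_\psi)$. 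Note that it actually suffices to place $1$ in the range of $M_\psi$, so a priori one only needs surjectivity of $M_\psi$ together with its injectivity, the latter being immediate since $|\psi|\ge c>0$ forces $\psi f=0\Rightarrow f=0$.

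One inclusion of the spectral picture is free. By Lemma~\ref{le5.1} we have $M_\psi^*K_z=\psi(z)K_z$, and by Assumption {\bf(c)} (see Remark~\ref{rem4.2}) $K_z\ne 0$ for every $z\in\Omega$; hence each $\psi(z)$ is an eigenvalue of $M_\psi^*$, so $\overline{\psi(\Omega)}\subseteq\sigma(M_\psi^*)=\sigma(M_\psi)$. The hypothesis $|\psi(z)|\ge c$ gives $\psi(\Omega)\subseteq\{w:|w|\ge c\}$, whence $0\notin\overline{\psi(\Omega)}$. Consequently the whole difficulty is concentrated in the reverse inclusion $\sigma(M_\psi)\subseteq\overline{\psi(\Omega)}$: once this is known, $0\notin\overline{\psi(\Omega)}=\sigma(M_\psi)$ and we are done. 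This is exactly the identity $\sigma(M_\psi)=\overline{\psi(\Omega)}$ established for Hardy--Sobolev spaces in \cite{CHZ}, which I would take as the model case.

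To prove $\sigma(M_\psi)\subseteq\overline{\psi(\Omega)}$ in this generality, I would pass to the commutative unital Banach algebra $\mathcal A=\mathcal M(B(\Omega))$ (unital since $M_1=I$, and $\mathcal A\subseteq B(\Omega)$ because $\psi=\psi\cdot 1$). Each $z\in\Omega$ gives a character $\chi_z(\psi)=\psi(z)$, and Gelfand theory identifies $\sigma_{\mathcal A}(\psi)$ with $\{\chi(\psi):\chi\in M_{\mathcal A}\}$, where $M_{\mathcal A}$ is the maximal ideal space. The pointwise bound $|\psi(z)|\ge c$ controls $\psi$ on the point-evaluation characters, and the remaining task is to show that $\{\chi_z:z\in\Omega\}$ is weak-$*$ dense in $M_{\mathcal A}$; for then $\{\chi(\psi):\chi\in M_{\mathcal A}\}\subseteq\overline{\psi(\Omega)}$ and $\psi$ is invertible in $\mathcal A$. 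This density is a corona-type statement for the multiplier algebra of $B(\Omega)$.

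The main obstacle is precisely this corona-type density, equivalently the reverse spectral inclusion: in full generality it need not be accessible, which is why the statement is only conjectural. I would therefore pursue two lines in parallel. The operator-theoretic line is to build the inverse directly: on the dense subspace $\bigvee_{z\in\Omega}\{K_z\}$ of $(B(\Omega))^*$, which is dense when $B(\Omega)$ is reflexive by Theorem~\ref{th1.1}, define $S$ by $SK_z=\psi(z)^{-1}K_z$, so that formally $SM_\psi^*=M_\psi^*S=I$; the crux is then a uniform (frame-type) lower estimate $\|M_\psi^*F\|\ge\delta\|F\|$ on finite combinations $F=\sum_i\alpha_iK_{z_i}$, which upgrades the trivial per-eigenvector bound $\|M_\psi^*K_z\|=|\psi(z)|\,\|K_z\|\ge c\|K_z\|$ to boundedness of $S$, and hence (since $M_\psi$ is already injective) to invertibility of $M_\psi$. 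The function-theoretic line is to realize $1/\psi$, which is holomorphic and bounded by $1/c$ on $\Omega$, as a weak-$*$ limit of solutions of $\psi f_k=h_k$ with $h_k\to 1$, and then to force the limit back into $B(\Omega)$ using the normal-family machinery of Section~2 (Lemmas~\ref{le2.2}--\ref{le2.3} together with the reflexivity argument). Either route hinges on a quantitative lower bound that the bare hypothesis $|\psi|\ge c$ does not obviously supply, and securing that bound is where I expect the real work to lie.
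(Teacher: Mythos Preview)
The statement you are attempting to prove is not a theorem in the paper but an open \emph{conjecture} (it appears in Section~6, ``Further Remarks''). The paper offers no proof; it only records the observation that the conjecture is equivalent to the spectral identity $\overline{\psi(\Omega)}=\sigma(M_\psi)$, which is precisely the reduction you carry out in your first two paragraphs. So on the overlap your analysis agrees with the paper's: the forward inclusion $\overline{\psi(\Omega)}\subseteq\sigma(M_\psi)$ follows from $M_\psi^*K_z=\psi(z)K_z$, and the entire content of the conjecture is the reverse inclusion.

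Your proposal then goes well beyond the paper by outlining two strategies (a corona-type density of point evaluations in the maximal ideal space of $\mathcal M(B(\Omega))$, and a direct construction of the inverse on $\bigvee_z\{K_z\}$ via a frame-type lower bound). You are right to flag both as conditional: neither the weak-$*$ density of $\{\chi_z\}$ in $M_{\mathcal A}$ nor the uniform estimate $\|M_\psi^*F\|\ge\delta\|F\|$ on finite combinations of evaluation functionals is available in this generality, and the paper does not claim otherwise. In short, your reduction matches the paper's, your candid identification of the obstacle is correct, and there is no ``paper's own proof'' to compare against---the authors leave the problem open.
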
\label{c1}
Obviously, the conjecture is equivalent to  $\overline{\psi (\Omega )}= \sigma (M_{\psi }).$

\bigskip
\bigskip
 \noindent
{\bf Acknowledgments:} The authors would like to thank Prof. L. Yan and Prof. K. Zhu for helpful discussions.
 G.F. Cao was supported by NNSF of China (Grant Number 12071155).
L. He  was supported by NNSF of China (Grant Number  11871170). J. Li is supported by the Australian Research Council (ARC) through the
research grant DP220100285.

\end{document}